\documentclass{article}

\usepackage{booktabs}     % For professional table rules (\toprule, \midrule, \bottomrule)
\usepackage{bigdelim} 
\usepackage[normalem]{ulem}
\usepackage{float}
\newcommand{\PP}{\mathbb{P}}

\DeclareMathAlphabet{\mathcal}{OMS}{cmsy}{m}{n}
\DeclareMathOperator{\Image}{Im}
\DeclareMathOperator{\SL}{SL}
\DeclareMathOperator{\GL}{GL}

\newcommand{\pCs}{\texttt{pCs}}
\newcommand{\pNs}{\texttt{pNs}}
\newcommand{\normSplitCartan}[1]{\texttt{#1Cn}}

\renewcommand{\le}{\leqslant}

\renewcommand{\ge}{\geqslant}
\renewcommand{\geq}{\geqslant}

\newcommand{\Fp}{\mathbb{F}_p}%\mathbb{Z}/p\mathbb{Z}}
\newcommand{\cupw}{\, \cup \;\,}

\title[Completing the classification of torsion subgroups for rational elliptic curves over sextic fields]{COMPLETING THE CLASSIFICATION OF TORSION SUBGROUPS FOR RATIONAL ELLIPTIC CURVES OVER SEXTIC FIELDS}
\keywords{Torsion groups of elliptic curves, Elliptic curves over number fields, Galois representations, Modular curves}
\subjclass[2020]{11G05, 11F80, 14G05}
\begin{document}

\setlength{\abovedisplayskip}{3pt}
\setlength{\belowdisplayskip}{3pt}
\makeatletter
\newtheoremstyle{tight}
  {5.25pt}   % space above
  {5.25pt}   % space below
  {\itshape} % body font
  {}      % indent
  {\bfseries} % head font
  {.}     % punctuation after theorem name
  {0.5em} % space after theorem name
  {}      % theorem head spec
\makeatother

\makeatletter
\renewcommand{\section}{\@startsection{section}{1}%
  \z@{2.5ex plus 1ex minus .2ex}{1ex plus .2ex}%
  {\normalfont\large\bfseries}}
\renewcommand{\subsection}{\@startsection{subsection}{2}%
  \z@{2.0ex plus .9ex minus .2ex}{.8ex plus .2ex}%
  {\normalfont\bfseries}}
\makeatother

\theoremstyle{tight}

\makeatletter
\def\thm@space@setup{\thm@preskip=0pt
\thm@postskip=0pt}
\makeatother

%
%%%%%%%%%%%%%%%%%%%%
\newcounter{ctr}
\newtheorem{theorem}{Theorem}[section]
\newtheorem{proposition}[theorem]{Proposition}
\newtheorem{lemma}[theorem]{Lemma}
\newtheorem{corollary}[theorem]{Corollary}
\newtheorem{conjecture}[theorem]{Conjecture}
\newtheorem*{acknowledgments}{Acknowledgments}
\theoremstyle{definition}
\newtheorem*{definition}{Definition}
\theoremstyle{remark}
\newtheorem*{notation}{Notation}
\newtheorem*{remark}{Remark}
\newtheorem*{question}{Question}
\newtheorem{conj}[theorem]{Conjecture}

\author{Nikola Adžaga, Tomislav Gužvić}
\makeatletter

\patchcmd{\@setauthors}{\scshape}{\relax}{}{}
\patchcmd{\@author}{\MakeUppercase}{\relax}{}{}
\renewcommand{\author}[1]{\def\@author{#1}}

\makeatother
\address{Department of Mathematics, University of Zagreb Faculty of Civil Engineering}
\email{nikola.adzaga@grad.unizg.hr}
\address{{\vskip -2.5em}Department of Mathematics, University of Zagreb Faculty of Civil Engineering}
\address{{\vskip -3em} Ericsson Nikola Tesla, Croatia}
\email{tguzvic@gmail.com}

\begin{abstract}
We complete the classification of torsion subgroups $E(K)_\tors$ that can occur for an elliptic curve $E/\mathbb{Q}$ over a sextic number field $K$. Previous work determined the complete set of these groups, leaving the existence of only one group in question: $C_3 \oplus C_{18}$. We prove that this group does not occur.

Our proof relies on the theory of Galois representations attached to elliptic curves. The assumed existence of a $C_3 \oplus C_{18}$ torsion subgroup would impose strong, simultaneous constraints on the mod-$2$ and $3$-adic Galois representations of the curve. By applying the recent classification of $\ell$-adic Galois images for elliptic curves over $\mathbb{Q}$, we translate these arithmetic constraints into a problem of Diophantine geometry: the $j$-invariant of such a curve must correspond to a rational point on one of the finitely many modular curves. We then analyze these curves using classical methods and show that none have the necessary rational points corresponding to elliptic curves without complex multiplication, thereby proving our main result.
\end{abstract}
%%%%%%%%%%%%%%%%%%%%%%%%%%%%%%%%%%%%%%%%%%%
\maketitle
%%%%%%%%%%%%%%%%%%%%%%%%%%%%%%%%%%%%%%%%%%%
\section{Introduction}
Let $K$ be a number field, and $E/K$ be an elliptic curve. A famous theorem of Mordell and Weil states that $E(K)$ is a finitely generated abelian group. Therefore, we have that $E(K) \cong E(K)_{tors} \oplus \mathbb{Z}^{r}$, for some integer $r \ge 0$.
\par
A result of Mazur classifies the possible torsion structures of elliptic curves defined over the rational numbers. More precisely, we have:
\setlength{\topsep}{2em}

\begin{theorem}[\cite{mazurtorzija}]
    Let $E/\mathbb{Q}$ be an elliptic curve. Then 
    $E(\mathbb{Q})_{tors}$ is isomorphic to one of the following groups:
    \[C_{n}, \; 1 \le n \le 10, 12,\]
    \[ C_{2} \oplus C_{2n}, \; n \le 4.\]
\end{theorem}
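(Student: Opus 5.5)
The proof splits into two halves. First we show that every group in the list occurs. Then we exclude everything else, which is Mazur's actual content.

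\textbf{Existence.} For each of the fifteen groups we exhibit an explicit curve, most efficiently through the Kubert/Tate normal form. Write $E_{b,c}: y^2+(1-c)xy-by=x^3-bx^2$. The point $(0,0)$ has order at least $4$. Imposing $nP=O$ for a point $P$ of order $n$ gives a relation between $b$ and $c$, and this relation defines the modular curve $X_1(n)$. For $n\le 10$ and $n=12$ that curve has genus $0$, so it has infinitely many rational points. The groups $C_2\oplus C_{2n}$ come from $X_1(2,2n)$, which has genus $0$ for $n\le 4$; equivalently, one checks that the cubic splits completely over $\mathbb{Q}$.

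\textbf{Nonexistence.} We need to rule out points of order $11$, of order $p\ge 13$ prime, of orders $14, 15, 16, 18, 20, 21, 24, 25, 27, 35, 49$, and the groups $C_2\oplus C_{10}$, $C_2\oplus C_{12}$. Full $m$-torsion with $m\ge3$ is excluded by the Weil pairing, since it would force $\mu_m\subset\mathbb{Q}$.

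Most of the composite cases are handled by modular curves of positive genus with finitely many, explicitly computable rational points; these were done by Ogg, Kubert and others. The curves $X_1(11)$, $X_1(14)$, $X_1(15)$, $X_1(2,10)$ and $X_1(2,12)$ are elliptic curves of rank $0$ over $\mathbb{Q}$, so their rational points are torsion. One checks that all of these are cusps. The curves $X_1(16)$, $X_1(18)$ and the remaining composite levels reduce to the same kind of finite check on small-genus curves or their quotients.

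The \textbf{main obstacle} is primes $p\ge 13$. Here we would follow Mazur's Eisenstein ideal argument. A rational point of order $p$ gives a non-cuspidal rational point on $X_1(p)$, and hence on $X_0(p)$. Consider the Eisenstein quotient $\tilde J$ of $J_0(p)$. The key step is to show that $\tilde J(\mathbb{Q})$ is finite. This comes from a descent controlled by the Eisenstein ideal and the cuspidal subgroup of order $\operatorname{num}((p-1)/12)$.

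Next take the map $f: X_0(p)\to\tilde J$, $x\mapsto [x-\infty]$, and use that it is a formal immersion at $\infty$ in characteristics $\ell\ne 2,p$ (after a suitable choice of $\ell$). A noncuspidal rational point $x$ of order $p$ would reduce to a cusp modulo a small prime $\ell$: reduction at $\ell$ and the Hasse bound force $(\ell+1+2\sqrt\ell)<p$ issues, and one chooses $\ell=3$ with specialization arguments. Since $f(x)$ is torsion, and torsion injects under reduction, $f(x)=f(\infty)$ modulo $\ell$. The formal immersion property then forces $x=\infty$, which is a contradiction.

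This step is genuinely deep and cannot be shortcut. We would therefore cite \cite{mazurtorzija} for it rather than reprove it, together with Kenku's and Kubert's results for the remaining composite levels.
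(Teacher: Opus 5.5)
The paper gives no proof of this theorem; it is quoted from \cite{mazurtorzija} as background. Deferring to Mazur for the hard cases, and to Billing--Mahler, Ogg and Kubert for the small and composite levels, therefore matches what the paper does. Your reduction to the minimal excluded cases is correct: primes $p\ge 11$, the orders $14,15,16,18,20,21,24,25,27,35,49$, the groups $C_2\oplus C_{10}$ and $C_2\oplus C_{12}$, and the Weil pairing for full $m$-torsion with $m\ge 3$. Your existence half is not needed, since the statement as quoted only asserts the upper bound.

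The concrete error is the range you give the Eisenstein-ideal argument: it cannot handle $p=13$.
\begin{itemize}
\item $X_0(13)$ has genus $0$, so $J_0(13)=0$. The cuspidal group has order $\mathrm{num}(12/12)=1$, so the Eisenstein quotient $\tilde J$ is trivial.
\item $X_0(13)(\mathbb{Q})$ is infinite, so pushing a point of $X_1(13)$ down to $X_0(13)$ gives no contradiction.
\end{itemize}
The method works exactly for $p=11$ and $p\ge 17$, where $\mathrm{num}((p-1)/12)>1$. The prime $13$ needs the separate Mazur--Tate theorem: $X_1(13)$ has genus $2$, $J_1(13)(\mathbb{Q})$ is finite (of order $19$), and the six rational points of $X_1(13)$ are all cusps. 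So $13$ belongs with your finite checks, not with the Eisenstein case.

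Your final step is also stated in the wrong order. The correct chain is:
\begin{itemize}
\item $x\equiv\infty \pmod 3$ gives $f(x)\equiv f(\infty)=0$.
\item Injectivity of torsion under reduction at $3$ then gives $f(x)=0$ in $\tilde J(\mathbb{Q})$.
\item Only then does the formal immersion force $x=\infty$.
\end{itemize}
You should also justify $x\equiv\infty \pmod 3$ rather than just asserting reduction to some cusp:
\begin{itemize}
\item The Hasse bound $|E(\mathbb{F}_3)|\le 7<p$ excludes good reduction at $3$.
\item The filtration $E_1(\mathbb{Q}_3)\subset E_0(\mathbb{Q}_3)\subset E(\mathbb{Q}_3)$ excludes additive and non-split multiplicative reduction.
\item In the split multiplicative case, the constant kernel $\mathbb{Z}/p\mathbb{Z}$ is not $\mu_p$. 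This determines which cusp $x$ reduces to, possibly after applying $w_p$.
\end{itemize}
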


One may ask what happens over the number fields of fixed degree $d$. For quadratic fields ($d=2$), the classification includes $11$ additional possible groups, as shown by Kenku, Momose \cite{kenkumomose}, and Kamienny \cite{kamienny}. In the results mentioned so far, all the possible torsion structures (over $\Q$ and over quadratic fields) appear \emph{infinitely often}, i.~e.~for infinitely many non-isomorphic curves. For cubic fields, a new phenomenon appears: Jeon, Kim and Schweizer determined all torsion structures that appear infinitely often \cite{JKS}, while Najman found that there is a unique elliptic curve over a cubic field with $\Z/21\Z$-torsion \cite{najman_ratcubic}. 

The classification of torsion subgroups of elliptic curves that appear infinitely often over number fields of fixed degree has been completed for quartic fields by Jeon, Kim, and Park \cite{JeonKimPark2006}, and for quintic and sextic fields by Derickx and Sutherland \cite{DerickxSutherland2017}. More recently, Derickx and Najman completed the classification of possible torsion groups
of elliptic curves over quartic fields (all such groups appear infinitely often) \cite{DerickxNajman2025}.

%while for $d=3$, it was obtained by Sutherland, Zureick-Brown, and Rouse \cite{adic}.
\par
A natural variant of this problem is the following: instead of allowing elliptic curves defined over arbitrary number fields of degree $d$, one restricts to elliptic curves defined over $\mathbb{Q}$, and asks how their torsion subgroups can behave over number fields of fixed degree $d$.
\par
For a positive integer $d$, let $\Phi_{\mathbb{Q}}(d)$ denote the set of isomorphism classes of groups $E(K)_{tors}$, where $E$ ranges over all elliptic curves defined over $\mathbb{Q}$ and $K$ ranges over all number fields of degree $d$ over $\mathbb{Q}$. In other words, $\Phi_{\mathbb{Q}}(d)$ records all possible torsion subgroups of $E/\Q$ that can arise over degree-$d$ extensions of $\mathbb{Q}$.
\par 
The sets $\Phi_{\mathbb{Q}}(d)$ have been determined for several values of $d$.
For $d=2,3$ by Najman \cite{najman_ratcubic}, while the case $d=4$ has been settled by Chou \cite{chou2} and González Jiménez and Najman \cite{growth}. For $d=5$, this classification was obtained by González Jiménez \cite{Enrique} and more generally, for $d=p$ with $p\geqslant 7$ prime, by González Jiménez and Najman \cite{growth}.

More recently, Gu\v{z}vi\'c \cite{guzvic_pq_2023} determined the sets $\Phi_{\mathbb{Q}}(pq)$, where $p,q$ are primes with $pq \notin {4,6}$. For $d=6$, the classification was started in \cite{DGJ20}, while a nearly complete result was obtained \cite{tomi1}. More precisely, a set $S$ was determined such that at most one group $G \in S$ lies outside $\Phi_{\mathbb{Q}}(6)$, namely $G \cong C_3 \oplus C_{18}$. We prove that $C_3 \oplus C_{18}$ is not in $\Phi_{\Q}(6)$. Before going into more detail, we recall the necessary definitions and notation, and along the way give an overview of Galois representations and the subgroup structure of $\GL_2(\Fp)$. To motivate these notions, we also include an example from the Galois representations of an elliptic curve.

As a consequence, the following conjecture of Daniels and González-Jiménez \cite{DGJ20} holds.
\begin{theorem}\label{tm:complete}\vskip -0.3em
If $K$ is a sextic number field and $E/\Q$ is an elliptic curve, then 
$E(K)_{\tors}$ is one of the following groups:
\begin{enumerate}[label=\arabic*., itemsep=2pt, topsep=2pt]
    \item $C_m$, $m=1,\dots,16,18,21,30$, $m \neq 11$.
    \item $C_2 \oplus C_{2m}$, $m=1,\dots,7,9$.
    \item $C_3 \oplus C_{3m}$, $m=1,2,3,4$.
    \item $C_4 \oplus C_{4m}$, $m=1,3$.
    \item $C_6 \oplus C_6$.
\end{enumerate}
\end{theorem}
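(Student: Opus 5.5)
By the result of \cite{tomi1} recalled above, $\Phi_{\Q}(6)$ is contained in the listed set together with $C_3 \oplus C_{18}$, and every listed group is already known to occur. So Theorem~\ref{tm:complete} reduces to one claim: there is no elliptic curve $E/\Q$ and sextic field $K$ with $E(K)_{\tors} \supseteq C_3 \oplus C_{18}$. I would prove this through the mod-$2$ and $3$-adic Galois representations of $E$, treating CM and non-CM curves separately.

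\textbf{CM curves.} For CM $E/\Q$ the torsion over sextic fields is classified explicitly; I would quote that classification or check the finitely many CM $j$-invariants directly, in particular $j=0$, which is the only serious candidate because $3$-torsion is involved. This disposes of the CM case, so from now on $E$ is non-CM.

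\textbf{Step 1: consequences of $C_3 \oplus C_{18} \subseteq E(K)$.}
\begin{itemize}
\item $E[3] \subseteq E(K)$, so $\Q(E[3]) \subseteq K$. Hence $|\rho_{E,3}(G_\Q)|$ divides $6$. Since the Weil pairing forces $\det \rho_{E,3}$ to be surjective onto $\mathbb{F}_3^\times$, the mod-$3$ image is a group of order $2$ or $6$ with surjective determinant.
\item There is a point $P$ of order $9$ with $\Q(P) \subseteq K$. So $[\Q(P):\Q] \mid 6$, which forces a large orbit-stabilizer index in the mod-$9$ image.
\item There is a $K$-rational point of order $2$, so $E$ acquires a $2$-torsion point over a field of degree dividing $6$. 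Hence the $2$-division polynomial has a root of degree $1$ or $3$ over $\Q$. If the degree is $3$, then $\Q(E[2])$ is an $S_3$- or $C_3$-extension contained in the Galois closure of $K$, and this must be compatible with $\Q(E[3]) \subseteq K$.
\end{itemize}
This produces a list of constraints on the pair (mod-$2$ image, mod-$9$ image).

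\textbf{Step 2: reduce to finitely many modular curves.} I would invoke the classification of $3$-adic images of Rouse--Sutherland--Zureick-Brown (the "recent classification of $\ell$-adic images"). From it I would enumerate the $3$-adic images $H \le \GL_2(\Z_3)$ such that
\begin{itemize}
\item $H \bmod 3$ has order dividing $6$, and
\item $H \bmod 9$ has an orbit on points of order $9$ of size dividing $6$, with the degree of the resulting field coherent with $K$ containing $\Q(E[3])$.
\end{itemize}
For the sextic field to contain both, the field generated by $E[3]$ and $P$ must have degree dividing $6$, which should be a very restrictive condition on $H$. Each surviving $H$ is combined with the mod-$2$ condition: either a rational $2$-torsion point, or a cubic $2$-torsion field. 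This gives a fiber product of level $18$ or $36$. The $j$-invariant of $E$ then lies in the image of a rational point on one of finitely many modular curves $X_{H'}$.

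\textbf{Step 3: rational points on these curves (main obstacle).} Many fiber products will have genus $0$ or $1$ with a rational point, or will reduce to an already known curve. For genus-$0$ cases I expect a local obstruction or a direct contradiction with the degree count. For elliptic curves I would compute the Mordell--Weil rank, expect rank $0$, and check that the finitely many points are cusps or CM points. For any higher-genus curve I would try, in order, a rank-$0$ quotient or Jacobian, then Chabauty or an étale descent.

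I expect the hardest part to be the combinatorics of Step 2, namely getting the compatibility between $\Q(E[3])$, $\Q(P)$ and the $2$-torsion field inside a single degree-$6$ field exactly right so that no cases are lost. After that, one or two curves where the rank is positive may need a genuine Chabauty computation.
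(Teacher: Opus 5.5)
Your overall strategy is the paper's. You reduce to $C_3\oplus C_{18}$ via \cite{tomi1} and handle CM curves by citation; the paper uses \cite[\S4.6]{CMCCRS}, and note that $j=0$ is not the only candidate, since $j=54000$ and $j=-12288000$ also have CM by orders in $\Q(\sqrt{-3})$. For non-CM curves you combine the mod-$2$ image with the $3$-adic image from \cite{adic} and compute rational points on the resulting modular curves. In the paper these all have genus $1$ or $2$ with rank-$0$ Jacobian, so no Chabauty is needed. Your requirement that $\Q(E[3],P)$ have degree dividing $6$ actually forces $K=\Q(E[3],P)$. Degree $2$ would give $C_3\oplus C_9$ over a quadratic field, contradicting Theorem~\ref{thm:PhiQ2}, and odd degree is impossible because $\zeta_3\in\Q(E[3])$. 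This gives a cleaner filter on $G_E(9)$ than the paper's counting of cubic subfields. However, two steps of your plan do not go through as stated.

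First, \cite{adic} does not give a finite list of $3$-adic images. As Corollary~\ref{kor:RSZB3} shows, the case $H\le N_{ns}(27)$ is left open, and your mod-$3$ constraint does not exclude it. Up to conjugacy, the only order-$2$ subgroup of $\GL_2(\F_3)$ with surjective determinant is $\texttt{3Cs.1.1}=\langle\mathrm{diag}(1,-1)\rangle$, and $\mathrm{diag}(1,-1)=J$ lies in $N_{ns}(3)$. So in exactly the main case, the mod-$3$ data are compatible with $H\le N_{ns}(27)$. The paper closes this gap as follows. All three admissible mod-$3$ images, $\texttt{3Cs.1.1}$, $\texttt{3B.1.1}$ and $\texttt{3B.1.2}$, lie in a Borel subgroup, so $E$ has a rational $3$-isogeny. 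Then \cite{adic} forces $H$ to be one of the genus-$0$ groups of \cite[Table 1]{34}. You need this argument, or an equivalent one, before you can enumerate.

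Second, your treatment of the case $E(\Q)[2]=0$, where $K$ only contains a root of the $2$-division polynomial, does not work as a fibre product over the $j$-line. If $G_E(2)=\GL_2(\F_2)$, the level-$2$ factor imposes no condition at all. Since $K=\Q(E[3],P)\subseteq\Q(E[9])$, this case actually means $\Q(E[2])\subseteq\Q(E[9])$, which is an entanglement. To handle it you would have to enumerate, via Goursat, the subgroups of $\GL_2(\Z/18\Z)$ that are not products of their projections, and analyse their modular curves. Your plan only flags this as ``compatibility'' and does not do it. The paper sidesteps the whole case by importing $G_E(2)=\texttt{2B}$ from \cite[Theorem 18]{tomi1}. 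After that, only genuine fibre products with $X_{\texttt{2B}}$ or $X_{\texttt{3Cs.1.1}}$ are needed, together with the separate argument of Proposition~\ref{prop:Borel} for the Borel mod-$3$ images.
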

\vskip -0.5em \noindent All of the groups listed in Theorem \ref{tm:complete} indeed appear as $E(K)_{tors}$ for some elliptic curve $E/\Q$ and some sextic field $K$.

The classification of torsion growth of rational elliptic curves over number fields of degree equal to a product of two distinct primes is now complete.

The code we developed to support and verify the computations in this paper is available at \[ \text{\url{https://github.com/NikolaAdzaga/SexticTorsion}.}\]

\section{Galois representations and maximal subgroups of \texorpdfstring{$\GL_2$}{GL2}}

We recall the background needed for the proof and fix notation.

\subsection{Galois representations}
Let \(E/\mathbb{Q}\) be an elliptic curve. Recall \hspace*{0.02em}  that \hspace*{0.02em}  the \hspace*{0.02em}  group \hspace*{0.02em}  of
\hspace*{0.02em}  \(n\)-torsion \hspace*{0.02em}  points \hspace*{0.02em}  of \hspace*{0.02em}  \(E(\Qbar)\) \hspace*{0.02em} is \ \( 
E[n] \nobreak\cong\nobreak \mathbb{Z}/n\mathbb{Z} \oplus \mathbb{Z}/n\mathbb{Z}\).
The absolute Galois group \(\mathrm{Gal}(\overline{\mathbb{Q}}/\mathbb{Q})\)
naturally acts on these torsion points, while respecting the group structure: every automorphism of
\(\overline{\mathbb{Q}}\) acts on the coordinates of the points in \(E[n]\),
and therefore induces an automorphism of \(E[n]\). In this way we obtain
a homomorphism $
\mathrm{Gal}(\overline{\mathbb{Q}}/\mathbb{Q}) \longrightarrow
\mathrm{Aut}(E[n]) \cong \mathrm{GL}_2(\mathbb{Z}/n\mathbb{Z})$,
which is called the \emph{mod \(n\) Galois representation} attached to \(E\),
and is usually denoted by \(\rho_{E,n}\) or just \(\rho_n\) (if the elliptic curve \(E\) is clear from the context).%, we do not include it in the subscript.

The coordinates of the $n$-torsion points \(E[n]\) are
algebraic numbers, and the smallest field containing these coordinates is the \emph{\(n\)-division field of \(E\)}, denoted \(\mathbb{Q}(E[n])\). Any automorphism
\(\sigma \in \mathrm{Gal}(\Qbar/\mathbb{Q}(E[n]))\) fixes all
points of \(E[n]\). Since such automorphisms act trivially on \(E[n]\), the action of \(\mathrm{Gal}(\overline{\mathbb{Q}}/\mathbb{Q})\) on
\(E[n]\) factors through the smaller Galois group
\(\mathrm{Gal}(\mathbb{Q}(E[n])/\mathbb{Q})\). Concretely, instead of tracking
how every automorphism of \(\overline{\mathbb{Q}}\) acts on torsion points, it
is enough to understand how the Galois group of the $n$-division field
\(\mathbb{Q}(E[n])\) acts. We will denote the image $\rho_{E,n}(\Gal(\overline{\Q}/\Q))$ by $G_E(n)$. 
Since $\Q(E[n])$ is a Galois extension of $\Q$ and 
$\ker \rho_{E,n} = \Gal(\overline{\Q}/\Q(E[n]))$, 
by the first isomorphism theorem we have $G_E(n) \cong \Gal(\Q(E[n])/\Q)$.

Let us now see explicitly how the representation mentioned above is obtained in the case where \(n = p\) is prime. Let
\(\sigma \in \mathrm{Gal}(\mathbb{Q}(E[p])/\mathbb{Q})\). Then \(E[p]\) is
generated by two points of order \(p\), say \(P_1\) and \(P_2\). Thus, for
some \(P_1^\sigma \in E[p]\), we have \(
P_1^\sigma = \alpha P_1 + \beta P_2, \text{with } \alpha \neq 0 \text{ or } \beta \neq 0,\) and, similarly, \(
P_2^\sigma = \gamma P_1 + \delta P_2, \text{with } \gamma \neq 0 \text{ or } \delta \neq 0.\)
Therefore,
\(
\rho_p(\sigma) =
\begin{pmatrix}
\alpha & \gamma \\
\beta  & \delta
\end{pmatrix}
\in \mathrm{GL}_2(\mathbb{F}_p).
\) The fact that \(\rho_p(\sigma) \in \mathrm{GL}_2(\mathbb{F}_p)\), i.e. that
we obtain an invertible matrix, follows from the fact that \(\sigma\) is an
automorphism, and hence has an inverse. In the same way, when \(n\) is an
arbitrary integer (not necessarily prime), we obtain that \(\rho_n\) is a
homomorphism \(
\rho_n \colon \mathrm{Gal}(\Qbar/\mathbb{Q})
\longrightarrow \mathrm{GL}_2(\mathbb{Z}/n\mathbb{Z}).
\)

\smallskip
\noindent\textbf{Example.}  
Consider the elliptic curve $
E/\Q \colon y^2 + y = x^3 - x$.
The $\Qbar$-points of order \(3\) on \(E\) form the group \(E[3] \cong \mathbb{Z}/3\mathbb{Z} \oplus \mathbb{Z}/3\mathbb{Z}\).
The eight non-trivial points of order $3$ are obtained by performing the group law twice to get the general coordinates of $3P$ and solving $3P = \mathcal{O}_E$. The coordinates of points in $E[3]$ generate a number field
\(\mathbb{Q}(E[3])\) of degree \( 48 \) over \(\mathbb{Q}\). The associated Galois representation
\(
\rho_3 \colon \mathrm{Gal}(\Qbar/\mathbb{Q}) \to
\mathrm{GL}_2(\mathbb{Z}/3\mathbb{Z})
\)
describes how the absolute Galois group permutes the \(3\)-torsion points of
\(E\). In this case, the image of \(\rho_3\) is the full group
\(\mathrm{GL}_2(\mathbb{F}_3)\), which has order \(48\).
Equivalently, we can say that
$G_E(3) = \Image \rho_3 \cong \mathrm{Gal}(\mathbb{Q}(E[3])/\mathbb{Q}) \cong \mathrm{GL}_2(\mathbb{F}_3)$.
\href{https://github.com/NikolaAdzaga/SexticTorsion/blob/main/Galois_image_rho3.m}{The code} for computing this in Magma is provided at GitHub accompanying the paper, but one can also consult \href{https://www.lmfdb.org/EllipticCurve/Q/37/a/1}{this elliptic curve's LMFDB page}, which states that $\rho_3$ has maximal image.

\smallskip

It turns out that the situation of obtaining the whole general linear group is typical. Serre's open image theorem \cite{Serre1971} implies that for each elliptic curve without complex multiplication (CM), there are only finitely many primes $p$ such that $\rho_p$ is not surjective. Such non-surjective images and corresponding primes are called \emph{exceptional}. Moreover, Serre posed the following question. Is there a uniform bound $C$ such that for all primes $p>C$ and all elliptic curves $E$ without CM, the mod $p$ Galois representation $\rho_p$ is surjective? This question has motivated a lot of research and it is widely believed that the answer to his question is positive (sometimes incorrectly referred to as Serre's uniformity conjecture).
%This example illustrates concretely how torsion points give rise to number
%fields and how the Galois group of such a field is realized as a subgroup of
%\(\mathrm{GL}_2(\mathbb{F}_p)\).

Combining the actions on $\ell$-power torsion points (for prime $\ell$), one can also get an \emph{$\ell$-adic representation} $\rho_{E, \ell^\infty}$. For more exposition on $\ell$-adic representations and Serre's open image theorem, one can consult \cite[Section 1]{BLV09}. The recent preprint \cite{Kenji} offers a similar, though slightly different, perspective on the study of torsion over different fields, focusing on points on modular curves with rational $j$-invariant. This paper is notable for both its general results and its accessible introduction and application of \emph{adelic representations}.

Not every subgroup of $\GL_2(\mathbb{Z}/n\mathbb{Z})$ can be the image of $\rho_{E,n}$. To address this, Zywina introduced the following definition. We say that a subgroup 
\( G \subseteq \GL_2(\mathbb{Z}/n\mathbb{Z}) \) is \emph{applicable} if
\begin{itemize}
    \item it is proper (\( G \neq \GL_2(\mathbb{Z}/n\mathbb{Z}) \));
    \item it contains $-I$ and $\det(G) = (\mathbb{Z}/n\mathbb{Z})^\times$;
    \item it has an element of trace $0$ and determinant $-1$ fixing a point of order $n$ in $(\mathbb{Z}/n\mathbb{Z})^2$.
\end{itemize}
\noindent As Zywina has further shown, if $E/\Q$ is an elliptic curve such that $\rho_{E,n}$ is not surjective, then the image of this representation $\rho_{E,n}$ is an applicable subgroup \cite[Prop 2.2]{zywina}.

More recently, Rouse, Sutherland and Zureick-Brown have provided a near-complete classification of $\ell$-adic images of Galois representations in \cite{adic}. A crucial consequence of their work concerns the reductions of these images modulo $p$, which were already studied by Zywina. Moreover, and quite remarkably, for a given image $G_E(p)$ of $\rho_p$, the $j$-invariant of $E$ was parametrized, e.~g.~in \cite{zywina}. In fact, the parametrization associated to a subgroup $G\subseteq GL_2(\F_p)$ remains valid for any elliptic curve $E$ such that $G_E(p)$ is conjugate to a subgroup of $G$.

To understand this classification, since the image of $\rho_p$ is a subgroup of $\GL_2(\F_p)$, we now turn to a more algebraic theme of classifying such subgroups.

\subsection{Subgroups of \texorpdfstring{$\GL_2$}{GL2}}\label{subsec:Subgroups}
For a prime number $p$, let $\texttt{pB}$ denote the \emph{standard Borel subgroup} of $\GL_2(\Fp)$, i.~e.~the subgroup of invertible upper triangular matrices. A \emph{Borel subgroup} is any conjugate of the standard Borel subgroup, and throughout we will sometimes identify subgroups of
$\GL_2(\Fp)$ up to conjugation (equivalently, up to change of basis of $\Fp^2$); in particular, when we write $G\subseteq \texttt{pB}$
we mean that $G$ is contained in \emph{some} Borel subgroup.
 This subgroup, as well as the others introduced below, plays an important role in understanding the subgroup structure of $\GL_2(\Fp)$. Concretely, if $G$ is a subgroup of $\GL_2(\Fp)$ with order divisible by $p$, then $G\subseteq \texttt{pB}$ (or equivalently stabilizes a line in $\Fp^2$), or $\SL_2(\Fp)\subseteq G$.

Define the following matrices in $\mathrm{GL}_2(\Fp)$, for $a, b \in \Fp$ and a quadratic non-residue $\phi\in \Fp$:
\vskip -0.5em
\[
D(a,b) = 
\begin{bmatrix}
a & 0 \\
0 & b
\end{bmatrix}, \quad
M_\phi(a,b) = 
\begin{bmatrix}
a & b\phi \\
b & a
\end{bmatrix}, \quad
T = 
\begin{bmatrix}
0 & 1 \\
1 & 0
\end{bmatrix}, \quad
J = 
\begin{bmatrix}
1 & 0 \\
0 & -1
\end{bmatrix}.
\]

We now introduce some notation, namely the following subgroups of $\Gl_{2}(\Fp)$. \emph{The split Cartan subgroup} consists of diagonal matrices \[ \pCs= \{ D(a,b) : a,b \in (\Fp)^{\times} \}.\]
This subgroup is isomorphic to $\Fp^\times \times \Fp^\times$ and has index $2$ in its normalizer, which consists of diagonal and anti-diagonal matrices: 
\[ \normSplitCartan{p} = \pCs\cupw T\cdot\pCs = \{ D(a,b), \; T \cdot D(a,b) : a,b \in (\Fp)^{\times} \}.\]
Geometrically, the matrix $T$ acts on the split Cartan by permuting the diagonal entries, which corresponds to swapping the two eigenspaces (the automorphism $(x,y) \mapsto (y,x)$ of $\Fp \times \Fp$).

\emph{The non-split Cartan subgroup} is
\[ \pNs = \{ M_{\phi}(a,b) : a,b \in (\Fp)^{2}, (a,b) \neq (0,0) \}.\]
This subgroup is isomorphic to $\F_{p^2}^\times$. Its normalizer is denoted by
\[ \mathrm{N_{ns}(p)}= \texttt{pNs} \cupw J\cdot\texttt{pNs} = \{ M_{\phi}(a,b), \; J \cdot M_{\phi}(a,b) : a,b \in (\Fp)^{2}, (a,b) \neq (0,0) \}.\]
Here, the action of $J$ corresponds to the Frobenius automorphism (Galois conjugation) on the underlying quadratic extension $\F_{p^2}$.

Cartan subgroups $\pCs$ and $\pNs$ are the maximal diagonalizable
parts of $\GL_2(\Fp)$, the distinction being whether their eigenvalues
lie in $\F_p$ (split case) or its quadratic extension (non-split case). Cartan subgroups can likewise be defined over more general fields of
characteristic~$p$, and the following statement holds for finite
subgroups even in that setting. A subgroup $G$ of $\GL_2(\F_p)$ of order
prime to $p$ is either contained in the normalizer of a Cartan
subgroup, or $G/\F_p^{\times}I$, the projective image of $G$, is
isomorphic to $A_4$, $S_4$, or $A_5$ (the exceptional permutation groups).

The description of subgroups of $\GL_2(\Fp)$ is usually referred to as Dickson’s classification of maximal subgroups \cite{Dickson}. %Before turning to Galois representations,
For proofs and more details on the classification of subgroups of $\GL_2$ over fields of positive characteristic, see \cite[Chap.~XI.]%, \emph{where the going is comparatively easy}]
{LangMF}.
%where the going is comparatively easy

\smallskip
The classification of maximal subgroups of $\GL_2(\Fp)$ plays a central role in the general approach to this problem. 
The strategy is to show that, for $p$ sufficiently large, there are no elliptic curves without complex multiplication for which the image of $\rho_{E,p}$ is contained in one of these maximal subgroups. 
The exceptional cases were solved by Serre in \cite{Serre}. 
In \cite{mazur:isogenies}, Mazur handled the Borel case by determining all possible prime degrees of rational isogenies of elliptic curves over $\Q$. 
Bilu, Parent, and Rebolledo studied the case of the normaliser of a split Cartan. 
In \cite{BPR}, they proved that if $E/\Q$ is an elliptic curve without CM, and $p \geq 11$ is a prime different from $13$, then the image of $\rho_{E,p}$ is not contained in $\normSplitCartan{p}$. 
The remaining case $p=13$ was settled in \cite{BDMTV}.
To fully answer the question of Serre, it remains to rule out the possibility that the image of the mod $p$ Galois representation of any non-CM elliptic curve defined over $\Q$ is not contained in the normaliser of the non-split Cartan subgroup of $\GL_2(\Fp)$. 
This is still an open problem, but some progress has been made. 
Le Fourn and Lemos \cite{LeFournLemos2021} have shown that if $p > 1.4 \cdot 10^7$ and $E/\Q$ is an elliptic curve without CM, then the image of $\rho_{E,p}$ is equal to $\GL_2(\Fp)$ or lies inside a normaliser of a non-split Cartan subgroup of $\GL_2(\Fp)$. More recently, Furio and Lombardo have excluded a part of the subgroups of $\mathrm{N_{ns}(p)}$ \cite{LombardoFurio}.

While the classification above concerns the field $\F_p$, at one point, our work also requires the analogous subgroups over rings. Let $n=p^k$ be a prime power (with $p>2, k\geqslant 2$) and let $\phi$ be a fixed quadratic non-residue modulo $p$. The \emph{non-split Cartan subgroup of level n}, denoted $C_{ns}(n)$, is the subgroup of $\mathrm{GL}_2(\mathbb{Z}/n\mathbb{Z})$ given by:
\[ C_{ns}(n) = \left\{ \begin{pmatrix} a & b\phi \\ b & a \end{pmatrix} : a,b \in \mathbb{Z}/n\mathbb{Z}, \text{ and } a^2-b^2\phi \in (\mathbb{Z}/n\mathbb{Z})^\times \right\}. \]
Its normalizer, denoted $N_{ns}(n)$, is the subgroup of $\mathrm{GL}_2(\mathbb{Z}/n\mathbb{Z})$ generated by $C_{ns}(n)$ and the matrix $J = \begin{pmatrix} 1 & 0 \\ 0 & -1 \end{pmatrix}$. Then
$N_{ns}(n) = C_{ns}(n) \cupw J \cdot C_{ns}(n)$.
%\vskip -65em

\remark{When $n=p$ is prime, the subgroup $C_{ns}(p)$ is the (non-split) Cartan subgroup of $\GL_2(\Fp)$,
denoted $\texttt{pNs}$ in the notation introduced earlier in this section. Both notations are common; in what follows we will mostly work with subgroups of $\GL_2(\Fp)$.}

\subsection{Torsion growth over quadratic and cubic fields}\label{subsec:torsion-growth-low-degree}

In the proof of Theorem~\ref{tm:no318} we will repeatedly use the known classifications of
torsion subgroups of rational elliptic curves after base change to quadratic and cubic extensions. In particular, we will use Theorem~\ref{thm:PhiQ2} to exclude the possibility that a point of order $18$
is defined over a quadratic subfield, and Theorem~\ref{thm:PhiQ3} to control the number of possible points of order $9$ over cubic subfields.

We record here the statements in the form needed later.

\begin{theorem}[{\protect{\cite[Theorem 2]{najman_ratcubic}}}]\label{thm:PhiQ2}
Let $E/\mathbb{Q}$ be an elliptic curve and let $F$ be a quadratic field. Then
\[
E(F)_{\tors}\cong
\begin{cases}
C_m, & m=1,\dots,10,12,15,16,\\
C_2\oplus C_{2m}, & m=1,\dots,6,\\
C_3\oplus C_{3m}, & m=1,2,\\
C_4\oplus C_4. &
\end{cases}
\]
\end{theorem}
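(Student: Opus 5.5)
This statement is Najman's classification of $\Phi_{\mathbb{Q}}(2)$, and it is only cited here. I would prove it by reducing torsion over a quadratic field $F=\mathbb{Q}(\sqrt{d})$ to torsion over $\mathbb{Q}$ on $E$ and on its quadratic twist $E^d$. The basic tool is the standard decomposition: for odd $n$, $E(F)[n]\cong E(\mathbb{Q})[n]\oplus E^d(\mathbb{Q})[n]$, since the nontrivial element $\sigma\in\mathrm{Gal}(F/\mathbb{Q})$ splits the odd part into its $+1$ and $-1$ eigenspaces. For the $2$-primary part I would argue separately, using that the rational $2$-power torsion of $E$ and of $E^d$ embeds into $E(F)$ with controlled index.

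\emph{Odd part.} By Mazur's theorem applied to $E$ and $E^d$, each odd-order factor lies in $\{1,3,5,7,9\}$. Moreover, a product of rational odd torsion on $E$ and $E^d$ forces a rational isogeny of composite degree on $E$. Mazur's and Kenku's classification of rational cyclic isogenies (degrees $\le 19$, plus $21,25,27,37,43,67,163$) then rules out most combinations. $C_3\oplus C_3$ needs $E(\mathbb{Q})[3]$ and $E^d(\mathbb{Q})[3]$ both nontrivial; the Weil pairing then forces $F=\mathbb{Q}(\sqrt{-3})$. This is how $C_3\oplus C_{3m}$ with $m=1,2$ arises. Odd order $15$ comes from a $15$-isogeny; $21$ and $35$ are excluded by checking the finitely many rational points on $X_0(21)$ and the absence of a $35$-isogeny. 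The combination $9\cdot 3$ is excluded via $X_0(27)$.

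\emph{$2$-part.} Full $2$-torsion over $F$ yields at most $C_2\oplus C_{2m}$. $C_4\oplus C_4$ forces $F=\mathbb{Q}(i)$ by the Weil pairing. Cyclic $16$ can occur; I would bound the larger $2$-power orders using the $2$-adic image classification of Rouse--Zureick-Brown (or directly via isogeny degrees $\le 16$).

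\emph{Mixed orders.} Finally I would combine the two parts. Each mixed candidate, such as $C_{14}$, $C_{18}$, $C_{20}$, $C_{24}$, or $C_2\oplus C_{14}$, gives $E$ a rational cyclic isogeny of a degree excluded by Kenku's list, or a rational point on a modular curve with only cusps and CM points. That curve is either of genus $0$ with an obstruction, or of genus $1$ of rank $0$ such as $X_0(14)$, $X_0(20)$ or $X_0(24)$. For example, $C_{14}$ would need a $14$-isogeny, and $X_0(14)$ has only two non-cuspidal rational points. Those points correspond to $j=-3^3 5^3$ and $3^3 5^3 17^3$, and for them I would verify the torsion over $F$ directly.

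\emph{Main obstacle.} The hardest step is the mixed $2$-part case analysis. This means proving that $C_{16}$ and $C_2\oplus C_{12}$ occur but $C_{20}$, $C_{24}$ and $C_2\oplus C_{14}$ do not. It needs careful bookkeeping of how $2$-power torsion grows under quadratic base change, together with explicit rational-point computations on the relevant $X_0(N)$. Realizability of each listed group would finally be shown by explicit examples.
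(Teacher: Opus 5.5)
The paper gives no proof of this statement; it only cites Najman. Najman's argument is short because it starts from the Kamienny--Kenku--Momose classification of torsion of \emph{all} elliptic curves over quadratic fields. That list differs from the stated one only by $C_{11}$, $C_{13}$, $C_{14}$ and $C_{18}$. The first two are excluded by exactly your decomposition $E(F)[n]\cong E(\mathbb{Q})[n]\oplus E^d(\mathbb{Q})[n]$ together with Mazur. The last two need an observation you never state: a point of order $2$ over a quadratic field forces a rational one. Indeed, if the $2$-division cubic has a root in $F\setminus\mathbb{Q}$, its conjugate is a second root, so the third is rational. Moreover $E$ and $E^d$ have the same $2$-torsion. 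Hence a point of order $14$ or $18$ in $E(F)$ puts $C_{14}$ or $C_{18}$ into $E(\mathbb{Q})$ or $E^d(\mathbb{Q})$, contradicting Mazur, and no modular curve is needed. Your odd-part analysis is otherwise sound. (In the $9\cdot 3$ case the cyclic $27$-isogeny lives on the $3$-isogenous curve rather than on $E$, which is harmless.)

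The genuine gap is the $2$-primary and mixed part. You defer it as ``bookkeeping'', but it carries all the difficulty once Kamienny--Kenku--Momose is not used.

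\emph{No eigenspace splitting at $2$.} Your mechanism (each excluded group yields a rational cyclic isogeny of forbidden degree, or a point on a genus $\le 1$, rank-$0$ curve) is unjustified. A point $P\in E(F)$ of order $2^k$ need not lie in $E(\mathbb{Q})$ or $E^d(\mathbb{Q})$, nor generate a $\sigma$-stable subgroup. Indeed $C_{16}$ occurs over quadratic fields, although Mazur forbids a point of order $16$ in both $E(\mathbb{Q})$ and $E^d(\mathbb{Q})$. So your parenthetical ``directly via isogeny degrees $\le 16$'' does not bound the exponent. What works is $2P=(P+\sigma P)+(P-\sigma P)$ with the summands in $E(\mathbb{Q})$ and $E^d(\mathbb{Q})$, which gives exponent $\le 16$. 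That must then be followed by a case analysis of $\sigma P$ relative to $\langle P\rangle$ in every configuration.

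\emph{Where the mechanism fails outright.} $X_0(18)$ has genus $0$ and infinitely many rational points, so $C_{18}$ is not an isogeny obstruction; it is Mazur (i.e.\ $X_1(18)$) that excludes it. For $C_6\oplus C_6$ over $\mathbb{Q}(\sqrt{-3})$ with only one rational $2$-torsion point, the isogenies you can extract (two independent $3$-isogenies and one $2$-isogeny) only produce a cyclic $18$-isogeny, which is allowed. Excluding it requires using $\mathbb{Q}(E[2])=\mathbb{Q}(\sqrt{-3})$ in a separate level-$6$ rational-point computation.

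\emph{Unaddressed groups.} Groups such as $C_4\oplus C_8$ and $C_2\oplus C_{16}$ are left entirely to the unexecuted appeal to Rouse--Zureick-Brown.

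You can either carry out this $2$-adic analysis in full, or do as Najman does and invoke the Kamienny--Kenku--Momose list. In the latter case only the four easy exclusions above and the realizability examples remain.
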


\begin{theorem}[{\protect{\cite[Theorem 1]{najman_ratcubic}}}]\label{thm:PhiQ3}
Let $E/\mathbb{Q}$ be an elliptic curve and let $K$ be a cubic field. Then
\[
E(K)_{\tors}\cong
\begin{cases}
C_m, & m=1,\dots,10,12,13,14,18,21,\\
C_2\oplus C_{2m}, & m=1,2,3,4,7.
\end{cases}
\]
\end{theorem}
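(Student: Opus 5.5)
Since Theorem~\ref{thm:PhiQ3} is quoted from \cite{najman_ratcubic}, I would not reprove it here, but I would reconstruct it as follows. Let $E/\Q$ and let $K$ be a cubic field. The basic observation is that any $P\in E(K)$ of order $n$ has $[\Q(P):\Q]\in\{1,3\}$. So the $G_E(n)$-orbit of $P$ in $E[n]$, whose size equals the degree of the field generated by $P$, has size $1$ or $3$.

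\textbf{Primes.} First I would bound the primes $p$ dividing $\#E(K)_{\tors}$.
\begin{itemize}
\item If $\rho_{E,p}$ is surjective, the orbit of a nonzero point has size $p^2-1$. This is at most $3$ only for $p=2$.
\item If $G_E(p)$ lies in the normalizer of a Cartan subgroup or has exceptional projective image, then orbits of nonzero vectors are still large, by Dickson's classification (\S\ref{subsec:Subgroups}). I would rule these cases out for $p\ge 5$ by inspecting the applicable subgroups in Zywina's list \cite{zywina}.
\item Otherwise $G_E(p)\subseteq\texttt{pB}$ and $E$ has a rational $p$-isogeny. By Mazur \cite{mazur:isogenies}, $p\in\{2,3,5,7,11,13,17,19,37,43,67,163\}$. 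The field of definition of the kernel point then has degree dividing $p-1$, or the point lies outside the kernel and its orbit has size divisible by $p$.
\end{itemize}
Requiring orbit size $\le 3$ leaves $p\in\{2,3,5,7,13\}$. The case $p=13$ occurs via the cubic isogeny character. Here I would use the known isogeny characters of the CM curves and the explicit curves with $13$-isogenies.

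\textbf{Prime powers and products.} Next I would bound the exponents and the combinations. The pairing gives $\mu_n\subseteq K$ whenever $E[n]\subseteq E(K)$. A cubic field contains no primitive $m$-th roots of unity for $m>2$, so the only full torsion possible is $E[2]$. This explains why only $C_m$ and $C_2\oplus C_{2m}$ appear.

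For the cyclic part I would split into two cases. If $K/\Q$ is not Galois, the torsion over $K$ equals its intersection with the Galois closure, which has group $S_3$. If $K/\Q$ is cyclic, I would use that the $\Gal(K/\Q)$-module $E(K)[p^\infty]$ decomposes into the $\Q$-rational part and a twist-like part. In both cases the growth from $E(\Q)_{\tors}$ (known by Mazur) is controlled by rational isogenies. Then:
\begin{itemize}
\item $C_{18}$, $C_{14}$ and $C_2\oplus C_{14}$ come from rational $9$- and $7$-isogenies with cubic characters.
\item $C_{21}$ requires simultaneous rational $3$- and $7$-isogenies, i.e.\ a rational $21$-isogeny. $X_0(21)$ has finitely many non-cuspidal rational points, and I would check each explicitly; only the curve 162b1 gives a cubic $21$-torsion point.
\end{itemize}

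\textbf{Main obstacle.} I expect the hardest step to be excluding the remaining candidate orders, such as $11, 15, 16, 20, 24, 26, 27, 28, 36$ and $C_2\oplus C_{10}, C_2\oplus C_{12}$. Each would force $E$ to have specified rational isogenies, or a specified $2$-adic or $3$-adic image with a cubic character condition. This translates into rational points on modular curves like $X_0(N)$ for $N\in\{15,20,26,27,28,\dots\}$ or on fiber products with the $2$-adic modular curves. I would first try to handle each case using the finitely many rational $j$-invariants from Kenku's classification of rational cyclic isogenies, checking the degrees of the torsion fields directly. For genuinely positive-rank cases, such as families with $E[2]$ growing in a cubic field together with a $5$- or $3$-isogeny, I would use the $2$-adic image classification of \cite{adic} to force a rational point on a curve of genus $\ge 1$ of rank $0$, and then enumerate its points.
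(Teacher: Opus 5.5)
Your proposal agrees with the paper, which likewise does not reprove this statement but takes it directly from Najman's classification of torsion of rational elliptic curves over cubic fields; your sketch (restricting primes via mod-$p$ images and isogeny characters, using the Weil pairing to force $C_m$ or $C_2\oplus C_{2m}$, and checking the finitely many rational points on $X_0(21)$ to isolate 162b1) is a reasonable outline of how that classification is proved. Treat the sketch only as a roadmap, since three steps need repair. First, when $G_E(p)$ lies in a split Cartan, a point off the kernel line can lie on the second stable line; its degree still divides $p-1$, so your list of primes survives. Second, excluding $p=37$ needs the isogeny characters of the two non-CM $j$-invariants with a rational $37$-isogeny, not only those of the CM curves. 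Third, splitting $E(K)[p^\infty]$ into a rational part and a norm-zero part over a cyclic cubic $K$ fails for $p=3$, which is exactly where $C_3$ grows to $C_9$.
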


\newpage
\section{Results}
Our result is the following theorem, and this section is entirely devoted to proving it.

\begin{theorem}\label{tm:no318}
    Let $E/\mathbb{Q}$ be an elliptic curve without complex multiplication, and let $K$ be a sextic number field. The group $E(K)_{tors}$ does not contain a subgroup isomorphic to $C_{3} \oplus C_{18}$.
\end{theorem}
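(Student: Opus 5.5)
Suppose $E/\Q$ is non-CM, $K$ is sextic, and $C_3\oplus C_{18}\subseteq E(K)_{\tors}$. Then $E[3]\subseteq E(K)$, so $\Q(E[3])\subseteq K$. Since $\Q(\zeta_3)\subseteq\Q(E[3])$ by the Weil pairing, $[\Q(E[3]):\Q]$ is even and divides $6$. Hence $G_E(3)$ is a subgroup of $\GL_2(\F_3)$ of order $2$ or $6$ with surjective determinant. Up to conjugacy this puts $G_E(3)$ inside the split Cartan $\texttt{3Cs}$ (order $2$ case) or inside a Borel $\texttt{3B}$ with a specific shape (order $6$ case: a group containing the unipotent element, hence with a rational $3$-isogeny and a rational or quadratic point of order $3$).

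K also contains a point $P$ of order $9$. Its field of definition $\Q(P)$ has degree dividing $6$. By Theorem~\ref{thm:PhiQ2}, a point of order $18$ (or even $9$ together with full $3$-torsion) cannot be defined over a quadratic subfield, and by Theorem~\ref{thm:PhiQ3} a cubic field gives at most $C_9$ or $C_{18}$ without extra $3$-torsion. So $\Q(P)$ and the $2$-torsion point both have controlled degrees, which I would enumerate.

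The plan is to translate these conditions into constraints on $\rho_{E,9}$ and $\rho_{E,2}$.

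\textbf{The $3$-adic side.} $K$ contains $E[3]$ together with a point of order $9$, and $[K:\Q]=6$. So the stabilizer in $G_E(9)$ of the pair (basis of $E[3]$, point of order $9$) has index dividing $6$. I would use the Rouse--Sutherland--Zureick-Brown database of $3$-adic images~\cite{adic} and list every possible $3$-adic image $H\subseteq\GL_2(\mathbb{Z}/27\mathbb{Z})$ with surjective determinant and containing $-I$, up to twist. The test for each $H$ is whether some subgroup of index $\le 6$ fixes $E[3]$ pointwise and fixes a vector of order $9$ (modulo $\pm1$, handling quadratic twists separately). I expect this to leave only a few groups, probably of level $9$, such as images contained in a Borel mod $9$ with special diagonal character, or in normalizer-of-Cartan type groups mod $9$. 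Each surviving group has a modular curve of genus $0$ or $1$ with a known $j$-map.

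\textbf{The $2$-torsion side.} The $C_{18}$ factor also needs a point of order $2$ in $E(K)$. Its field of definition has degree $1$, $2$ or $3$. Degree $1$ or $2$ means $G_E(2)$ lies in a Borel, giving a rational $2$-isogeny. Degree $3$ is possible with $G_E(2)$ of order $3$ or $6$; the order-$3$ case means $G_E(2)\subseteq \texttt{2Cn}$, i.e.\ the discriminant is a square. Beyond that I must check compatibility: all of this generates a field of degree $6$, so the $2$-part and $3$-part fields must be nested inside one sextic field. This forces entanglement conditions, for instance that the quadratic subfield of $\Q(E[2])$ equals $\Q(\sqrt{-3})$ when the degrees require it.

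\textbf{The fibre products.} For each surviving pair (mod-$2$ group, $3$-adic group) I form the fibre product modular curve of level $18$ or $54$ and compute its genus. Genus $0$ curves with points would contradict the finite list of known sextic examples, so I expect genus $\ge1$. For elliptic curves I would compute the rank (expecting $0$) and the torsion points, then check that all rational points are cusps or CM points. Any higher-genus curve would be mapped to a rank-$0$ elliptic quotient.

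\textbf{Main obstacle.} The hardest step is the $3$-adic enumeration together with the entanglement between the $2$- and $3$-division fields. When $\Q(E[2])$ shares its quadratic subfield with $\Q(E[3])$, the $j$-invariant is not simply a fibre product of the independent conditions. I would handle this first by writing explicit $j$-maps (from Zywina's parametrizations) and imposing the square-class condition on the discriminant directly, which yields twisted curves $y^2=f(t)$ whose rational points must then be determined.
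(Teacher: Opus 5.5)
Your mod-$3$ analysis agrees with the paper. Your $3$-adic criterion is also a valid necessary condition: the subgroup of $G_E(9)$ fixing $E[3]$ pointwise and some $P_9$ has index dividing $6$, because $\mathbb{Q}(E[3],P_9)\subseteq K$. It is slightly weaker than the paper's, which first proves that some $P_9$ has degree exactly $6$.

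The gap is on the $2$-torsion side, and the $3$-adic conditions alone cannot finish the proof. Every non-CM $E/\mathbb{Q}$ with $E(\mathbb{Q})_{\mathrm{tors}}\cong C_9$ has $C_3\oplus C_9\subseteq E(\mathbb{Q}(E[3]))$ with $[\mathbb{Q}(E[3]):\mathbb{Q}]=6$. In the paper, the genus-$0$ survivors $9H^0\text{-}9b$ and $9I^0\text{-}9a,b,c$, and case 1 of Proposition~\ref{prop:Borel}, are eliminated only by intersecting with $j_{\texttt{2B}}(s)=256(s+1)^3/s$. The paper imports $G_E(2)=\texttt{2B}$ from \cite[Theorem 18]{tomi1}.

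You instead leave open the cases where the order-$2$ point $P_2$ of $C_{18}$ is cubic, and propose fibre products with the mod-$2$ image plus square-class conditions. These do not encode what is actually forced:
\begin{itemize}
\item for $G_E(2)=\GL_2(\mathbb{F}_2)$, the mod-$2$ image puts no condition on $j$ at all;
\item for $G_E(2)=\texttt{2Cn}$, the condition $j=t^2+1728$ is merely necessary.
\end{itemize}
In both cases the real constraint is the cubic entanglement $\mathbb{Q}(E[2])\subseteq K$. A fibre product of independent mod-$2$ and $3$-adic conditions does not capture it, and neither does a square class of the discriminant. So if those necessary-condition curves happen to have non-cuspidal, non-CM points, your plan has no way to proceed.

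The missing step is short. If $G_E(2)\neq\texttt{2B}$, then either $G_E(2)$ is trivial or $P_2$ is cubic. In the cubic case $\mathbb{Q}(E[2])\subseteq K$:
\begin{itemize}
\item In the Borel case, $K=\mathbb{Q}(E[3])$ is an $S_3$-field. A cubic $\mathbb{Q}(P_2)\subseteq K$ is then non-Galois with Galois closure $\mathbb{Q}(E[2])=K$.
\item For $\texttt{3Cs.1.1}$, Theorem~\ref{thm:PhiQ2} forces $K=\mathbb{Q}(E[3],P_9)\subseteq\mathbb{Q}(E[9])$. Since $|G_E(9)|$ divides $2\cdot 3^4$, the only quadratic subfield of $\mathbb{Q}(E[9])$ is $\mathbb{Q}(\sqrt{-3})$. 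Hence the Galois closure $\mathbb{Q}(E[2])$ of $\mathbb{Q}(P_2)$ lies in $\mathbb{Q}(P_2,\sqrt{-3})\subseteq K$.
\end{itemize}
Either way $C_6\oplus C_{18}\subseteq E(K)$, which the earlier classification \cite{tomi1} already excludes. Hence $G_E(2)=\texttt{2B}$, and your fibre-product step becomes the paper's.

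Two smaller points.
\begin{itemize}
\item Enumerating ``every possible $3$-adic image'' from the RSZB database must deal with the open case $H\le N_{ns}(3^3)$. Note that, up to conjugacy, $\texttt{3Cs.1.1}=\{I,J\}\subseteq N_{ns}(3)$, so mod-$3$ data alone does not exclude it. The paper closes this using the RSZB result that a rational $3$-isogeny forces a genus-$0$ image.
\item Your remark that ``genus-$0$ curves with points would contradict the known examples'' is circular: such a point would simply be a counterexample.
\end{itemize}
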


The main result we use is the classification of $\ell$-adic images from \cite{adic}. Since we only require the case $\ell=3$, we record the specialization relevant to our proof.

\begin{theorem}[{\protect{\cite[Theorem 1.6]{adic}}}]%
    Let $\ell$ be a prime, let $E/\mathbb{Q}$ be an elliptic curve without complex multiplication, and let $H=\rho_{E,\ell^{\infty}}(G_{\mathbb{Q}})$. Exactly one of the following is true:
    \begin{enumerate}
        \item The modular curve $X_H$ is isomorphic to $\mathbb{P}^{1}$ or an elliptic curve of rank one, in which case $H$ is identified in \cite[Corollary 1.6]{34} and $\langle H,-I\rangle$ is listed in \cite[Tables 1-4]{34};
        \item The modular curve $X_H$ has an exceptional rational point, and $H$ is listed in \cite[Table 1]{adic};
        \item $H \le N_{ns}(3^3), N_{ns}(5^2),N_{ns}(7^2),N_{ns}(11^2)$ or $N_{ns}(\ell)$ for some $\ell \ge 19$;
        \item $H$ is a subgroup of one of the groups labeled 49.147.9.1 or 49.196.9.1.2.
    \end{enumerate}
\end{theorem}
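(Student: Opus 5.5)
The statement is \cite[Theorem 1.6]{adic}, which we use as a black box. If I had to prove it, I would follow the strategy of Rouse, Sutherland and Zureick-Brown: reduce everything to determining rational points on finitely many modular curves.

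\textbf{Large $\ell$.} Let $E/\mathbb{Q}$ be non-CM and let $H=\rho_{E,\ell^\infty}(G_\mathbb{Q})$, an open subgroup of $\GL_2(\mathbb{Z}_\ell)$ by Serre's open image theorem. For $\ell\ge 5$, a closed subgroup of $\GL_2(\mathbb{Z}_\ell)$ whose reduction mod $\ell$ is all of $\GL_2(\F_\ell)$ is the whole group. So for $\ell\ge 5$ it suffices to study non-surjective mod $\ell$ images, running through Dickson's classification from Section~\ref{subsec:Subgroups}:
\begin{itemize}
\item Borel images are governed by Mazur's isogeny theorem \cite{mazur:isogenies}, which leaves finitely many $j$-invariants for $\ell\in\{17,37\}$ (and $\ell=11$). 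These become exceptional points, case (2).
\item Normalizers of split Cartans are excluded by \cite{BPR} and \cite{BDMTV}.
\item Exceptional projective images $A_4,S_4,A_5$ are excluded for large $\ell$ by Serre's arguments \cite{Serre}.
\end{itemize}
What survives for $\ell\ge 19$ is exactly $H\le N_{ns}(\ell)$, which is case (3).

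\textbf{Small $\ell$.} For $\ell\in\{2,3,5,7,11,13,17\}$, I would enumerate up to conjugacy all open $H\le\GL_2(\mathbb{Z}_\ell)$ satisfying Zywina's applicability conditions (surjective determinant, and an element with trace $0$ and determinant $-1$ playing the role of complex conjugation). I would organize them in a tree ordered by inclusion and level $\ell^k$, and compute the genus and an explicit model of each $X_H$. The top of the tree consists of the curves $X_H$ that are $\mathbb{P}^1$ with a point, or elliptic curves of positive rank; these carry infinitely many points and give case (1), matching the tables of \cite{34}. The descent through the tree stops at the \emph{arithmetically maximal} groups: those with $X_H(\mathbb{Q})$ finite (genus $\ge 2$, or rank-$0$ genus $1$) all of whose proper subgroups in the tree are strictly smaller. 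A Cummins--Pauli type bound on genus versus index guarantees this list is finite. Every non-CM $E$ whose image lies below an arithmetically maximal $H$ must then come from a point of $X_H(\mathbb{Q})$.

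\textbf{Provably finite curves.} For each arithmetically maximal $H$ I would compute $X_H(\mathbb{Q})$ and record the non-cuspidal, non-CM points in \cite[Table 1]{adic}; this gives case (2). The tools, tried in order, are:
\begin{itemize}
\item local obstructions and rank-zero quotients (map to a Jacobian factor of rank $0$);
\item classical Chabauty--Coleman when the rank is less than the genus;
\item elliptic Chabauty and étale descent when the rank equals the genus on the curve itself;
\item quadratic Chabauty for the hardest rank-equals-genus cases, such as $X_s(13)$ and $X_{ns}(13)$.
\end{itemize}
I expect this step to be the main obstacle, because it requires full control of rational points on curves of moderate genus with rank equal to genus. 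Whatever cannot be resolved is collected as cases (3) and (4): $N_{ns}(27)$, $N_{ns}(25)$, $N_{ns}(49)$, $N_{ns}(121)$, and the two $7$-adic groups \texttt{49.147.9.1} and \texttt{49.196.9.1.2}.

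\textbf{Exclusivity.} That exactly one case holds follows from the construction. Cases (1) and (2) are distinguished by whether $X_H(\mathbb{Q})$ is infinite. Cases (3) and (4) are precisely the groups for which no such determination was made.
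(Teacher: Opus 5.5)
This is correct and matches the paper, which gives no proof of its own and simply imports the result from \cite{adic}. Your outline is a fair summary of how that theorem is actually obtained: Mazur, \cite{BPR}, \cite{BDMTV} and Serre for $\ell\ge 19$, then a finite tree of $\ell$-adic images cut off at arithmetically maximal groups whose rational points are settled by local, Chabauty-type and quadratic Chabauty arguments. Two slips in the sketch are worth fixing. First, an arithmetically maximal $H$ should be one with $X_H(\mathbb{Q})$ finite such that every proper supergroup $H'$ of $H$ has $X_{H'}(\mathbb{Q})$ infinite; your condition on proper subgroups is vacuous, and with it the list would be infinite (finiteness really comes from there being only finitely many $H$ with $X_H$ of genus $\le 1$, each having finitely many maximal open subgroups). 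Second, your lifting statement for $\ell\ge 5$ needs $\det H=\mathbb{Z}_\ell^\times$, which holds here because $\det\rho_{E,\ell^\infty}$ is the $\ell$-adic cyclotomic character.
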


\noindent For $\ell=3$, we note that none of the listed $H$ in cases (2) and (4) is of level $3^n$. Hence, if $H=\rho_{E,3^{\infty}}(G_{\mathbb{Q}})$ is the $3$-adic representation of $E$, then either (1) holds, or $H \le N_{ns}(3^3)$.

\begin{corollary}[Special case of {\protect{\cite[Theorem 1.6]{adic}}}]\label{kor:RSZB3} Let $E/\mathbb{Q}$ be an elliptic curve without complex multiplication, and let $H=\rho_{E,3^{\infty}}(G_{\mathbb{Q}})$. Exactly one of the following is true:
\begin{enumerate}
    \item The modular curve $X_H$ is isomorphic to $\mathbb{P}^{1}$ or an elliptic curve of rank one (in which case $H$ is identified in \cite[Corollary 1.6]{34} and $\langle H,-I\rangle$ is listed in \cite[Tables 1-4]{34});
    \item $H \le N_{ns}(3^3)$.
\end{enumerate}    
\end{corollary}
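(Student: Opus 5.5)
We deduce the corollary by specializing \cite[Theorem 1.6]{adic} to $\ell=3$. The only real work is checking which of its four alternatives can hold for a group of $3$-power level.

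Let $E/\mathbb{Q}$ be non-CM and let $H=\rho_{E,3^\infty}(G_\mathbb{Q})\subseteq \GL_2(\mathbb{Z}_3)$. Then $H$ is an open subgroup of $\GL_2(\mathbb{Z}_3)$, and its level is a power of $3$. Applying \cite[Theorem 1.6]{adic} with $\ell=3$, exactly one of the alternatives (1)--(4) holds, and we go through them in turn.

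\emph{Alternative (1).} This is verbatim alternative (1) of the corollary, with the same references to \cite{34}.

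\emph{Alternative (4).} The groups labelled 49.147.9.1 and 49.196.9.1.2 have level $49=7^2$. They are $7$-adic images, i.e.\ subgroups of $\GL_2(\mathbb{Z}_7)$, and they arise only for $\ell=7$. So a $3$-adic image cannot be contained in either of them, and this alternative is vacuous.

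\emph{Alternative (3).} The groups $N_{ns}(5^2)$, $N_{ns}(7^2)$, $N_{ns}(11^2)$ and $N_{ns}(\ell)$ with $\ell\ge 19$ are the relevant possibilities only for the primes $5,7,11,\ell$. For $\ell=3$ the only possibility is $H\le N_{ns}(3^3)$. This means that the reduction of $H$ modulo $27$ is contained in a conjugate of $N_{ns}(27)\subseteq\GL_2(\mathbb{Z}/27\mathbb{Z})$, which is alternative (2) of the corollary.

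\emph{Alternative (2).} Here we need the content of \cite[Table 1]{adic}: by inspecting the labels there, no entry has level $3^n$. Since the level of $H$ is a power of $3$, $H$ cannot be one of the listed groups, so this alternative cannot occur for $\ell=3$. I expect this to be the only step needing care. It is not a proof but a lookup in the published table: read off the level, which is the first field of each label, and confirm that none is a power of $3$.

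\emph{Exclusivity.} After removing the vacuous alternatives, the corollary's two cases are exactly alternatives (1) and (3) of \cite[Theorem 1.6]{adic} for $\ell=3$. The ``exactly one'' assertion of that theorem therefore shows they are mutually exclusive, so exactly one of the two cases of the corollary holds.
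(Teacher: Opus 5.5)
Your proposal is correct and follows essentially the same route as the paper, which specializes \cite[Theorem 1.6]{adic} to $\ell=3$, notes that no group in alternatives (2) and (4) has $3$-power level, and concludes that either (1) holds or $H\le N_{ns}(3^3)$. Your separate treatment of alternative (3) and your exclusivity argument simply make explicit what the paper leaves implicit; like the paper, your case (2) rests on a lookup in \cite[Table 1]{adic} rather than on a proof.
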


The proof of the following proposition was essentially already given in a shorter form in \cite[Proof of Theorem 18]{tomi1}, while proving that if $E(K)_{tors} \supseteq C_3 \oplus C_{18}$, then $G_E(2)$ must be $\texttt{2B}$. Using and expanding this argument, we make explicit our new conclusion.

\begin{proposition}\label{prop:Borel}
Let $E/\Q$ be an elliptic curve without CM. 
If $G_E(2)=\texttt{2B}$, $G_E(3) \in \{$\texttt{3B.1.1}, \texttt{3B.1.2}$\}$, 
then $E(K)_{\tors}$ cannot contain $C_3 \oplus C_{18}$.
\end{proposition}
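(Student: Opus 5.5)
Since $C_3\oplus C_{18}\subseteq E(K)$ forces $E[3]\subseteq E(K)$, we get $\Q(E[3])\subseteq K$. In both cases $G_E(3)\in\{\texttt{3B.1.1},\texttt{3B.1.2}\}$ the image has order $6$: it consists of the matrices $\left(\begin{smallmatrix}1&b\\0&\pm1\end{smallmatrix}\right)$, or the analogous matrices with the trivial character in the other diagonal slot, and it is nonabelian, isomorphic to $S_3$. Hence $[\Q(E[3]):\Q]=6$ and $K=\Q(E[3])$, which is Galois over $\Q$ with group $S_3$ and contains $\Q(\sqrt{-3})$. The first step is to record this rigidity. From then on $K$ is Galois, so $E(K)[9]$ is a $\mathrm{Gal}(\overline{\Q}/\Q)$-stable subgroup. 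By hypothesis $E(K)[9]$ contains $C_3\oplus C_9$, and it cannot be all of $E[9]$, since that would force $\Q(\zeta_9)\subseteq K$. Therefore $E(K)[9]\cong C_3\oplus C_9$ is Galois-stable. Consequently $3\cdot E(K)[9]$ is a Galois-stable cyclic subgroup of order $3$, and $E(K)[9]/E[3]$ gives a Galois-stable subgroup of $E[9]/E[3]$. The aim is to translate this into a statement about $G_E(9)$.

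The second step is to bound $G_E(9)$. Elements of $\mathrm{Gal}(\overline{\Q}/K)$ fix $C_3\oplus C_9$ pointwise. In a suitable basis $e_1,e_2$ of $E[9]$ with $E(K)[9]=\langle e_1,3e_2\rangle$, such elements have the form $\left(\begin{smallmatrix}1&b\\0&d\end{smallmatrix}\right)$ with $b\equiv 0$ and $d\equiv1 \pmod 3$, a group of order $9$. Since $\mathrm{Gal}(\overline{\Q}/K)$ has index $6$, we get $|G_E(9)|\le 54$. Moreover $G_E(9)$ normalizes this kernel and stabilizes $\langle e_1,3e_2\rangle$, reduces mod $3$ to $G_E(3)$, and has surjective determinant onto $(\mathbb{Z}/9\mathbb{Z})^\times$. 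I would enumerate, in Magma, all subgroups of $\GL_2(\mathbb{Z}/9\mathbb{Z})$ with these properties, up to conjugacy. I would then use Corollary~\ref{kor:RSZB3}: either $\rho_{E,3^\infty}(G_\Q)$ lies in $N_{ns}(3^3)$, or it is one of the finitely many groups from \cite{34} whose modular curve is $\mathbb{P}^1$ or a rank one elliptic curve. The $N_{ns}(3^3)$ case is impossible, because its mod $3$ reduction lies in $N_{ns}(3)$ and cannot be a Borel image of order $6$ with a trivial diagonal character. This leaves a short explicit list of candidate $3$-adic images $H$ of level $9$ (or $27$) compatible with the bound. Theorem~\ref{thm:PhiQ3} serves as a sanity filter here. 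The cubic subfield $F\subset K$ can carry at most $C_9$ or $C_{18}$ torsion, and Theorem~\ref{thm:PhiQ2} excludes $C_{18}$ over the quadratic subfield $\Q(\sqrt{-3})$. Together these pin down which subfield carries the point of order $9$, which cuts the list further.

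The third step combines this with $G_E(2)=\texttt{2B}$, meaning a rational $2$-torsion point. For each surviving $H$, I would form the fibre product of $X_H$ with $X_0(2)$ (equivalently $X_1(2)$), i.e.\ the modular curve of level $18$ attached to $H\times \texttt{2B}$. The task is then to compute its rational points using the explicit $j$-maps (Zywina's parametrizations, \cite{34}), and to show that every rational point is a cusp or has CM $j$-invariant. Where the fibre product has genus $\ge 1$, I would identify it with an elliptic curve or a genus-$2$ curve, compute its Mordell--Weil group (rank $0$ hopefully, or Chabauty otherwise), and check the $j$-images of the finitely many points.

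I expect the main obstacle to be this last step: making sure the enumeration of level-$9$ groups is exhaustive with respect to \cite{adic}, and then determining the rational points on the resulting level-$18$ fibre products. If some curve has rank $1$ and genus $1$, infinitely many points would survive, and I would instead have to return to the group theory. The plan there would be to show that $C_3\oplus C_{18}$ also requires the $2$-torsion point to be compatible with the $S_3$-structure of $K$, i.e.\ that all of $E(K)[18]$ is accounted for, and to refine $H$ accordingly.
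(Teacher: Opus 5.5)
Your set-up is correct. $K=\Q(E[3])$ is an $S_3$-field containing $\Q(\sqrt{-3})$, and $E(K)[9]=\langle e_1,3e_2\rangle$ is Galois-stable; it is not $E[9]$, since $\Q(\zeta_9)$ is abelian of degree $6$. Moreover $\rho_{E,9}(G_K)=\ker(G_E(9)\to G_E(3))$ lies in your order-$9$ group.

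The gap is that your argument stops exactly where the content is. You never determine which level-$9$ groups survive, or what their fibre products with $X_0(2)$ are. You also concede that one of them might have infinitely many points, and offer only a vague fallback. Finiteness is not automatic: a rational $9$-isogeny plus a rational $2$-isogeny alone cut out $X_0(18)$, which has genus $0$. So finiteness must come from the condition that $\rho_{E,9}(G_K)$ fixes $e_1$, and you do not show that it does.

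The paper avoids a blind enumeration. It notes that $E$ has a point of order $9$ over $\Q(3^\infty)$ and splits on whether $E$ has a rational $9$-isogeny. Without one, it uses the explicit $j$-map of \cite[Lemma 6.13]{compositum}. The fibre product of that map with $j_{\texttt{2B}}$ is the genus-$2$ curve $y^2+(x^3+1)y=-9x^3$, whose Jacobian has rank $0$ and whose rational points are only CM points. With a rational $9$-isogeny, it uses Ingram's $X_0(9)$-family and a discriminant argument.

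Your own framework closes the $9$-isogeny branch more directly:
\begin{itemize}
\item Suppose $\langle f\rangle$ is a Galois-stable cyclic subgroup of order $9$. Then $\langle 3f\rangle$ is the unique $G_E(3)$-stable line $\langle 3e_1\rangle$, so $f\in\langle e_1\rangle+E[3]\subseteq E(K)$.
\item Hence the character $\psi$ by which $G_{\mathbb{Q}}$ acts on $\langle f\rangle$ factors through $\mathrm{Gal}(K/\Q)\cong S_3$. Its image is therefore abelian, so $\psi^2=1$.
\item For \texttt{3B.1.1}, $\psi\equiv 1\pmod 3$, which forces $\psi=1$. For \texttt{3B.1.2}, $\psi$ is the quadratic character of $\Q(\sqrt{-3})$.
\item Either way, $E$ or $E^{(-3)}$ has a rational point of order $18$, because quadratic twists preserve the rational $2$-torsion point. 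This contradicts Mazur.
\end{itemize}

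For the branch without a rational $9$-isogeny you still need one explicit curve. You can cite \cite[Lemma 6.13]{compositum}, or actually run your enumeration and show that it lands in the corresponding level-$9$ group. The genus-$2$ computation above then finishes the proof, and Corollary~\ref{kor:RSZB3} is not needed.
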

\vskip -2em
\remark{In Table \ref{tab:3B}, we reproduce a part of exceptional Galois images from \cite[Table 3]{SUTHERLAND_2016}.\vskip -0.5em
\setlength{\abovecaptionskip}{26pt} 
\setlength{\belowcaptionskip}{-11pt} 
\begin{table}[H]
\centering
\begin{tabular}{@{}c@{\hspace{4pt}}l c c c c l c@{}}
%\toprule
% Table Headers
& \textbf{Group} & {Index} & \textbf{Generators} & $-1$ & $d$ & Curve \\
\cmidrule(l){2-7}
% First Data Row
\raisebox{-3pt}{\ldelim\{{2}{*}}% This creates a left brace spanning 2 rows
& \texttt{3B.1.1} & 8 & $\left(\begin{smallmatrix} 1 & 0 \\ 0 & 2 \end{smallmatrix}\right), \left(\begin{smallmatrix} 1 & 1 \\ 0 & 1 \end{smallmatrix}\right)$ & No & 6 & {\small \textcolor{black}{[1, 0, 1, -1, 0]}} \\[1.5ex]

% Second Data Row
& \texttt{3B.1.2} & 8 & $\left(\begin{smallmatrix} 2 & 0 \\ 0 & 1 \end{smallmatrix}\right), \left(\begin{smallmatrix} 1 & 1 \\ 0 & 1 \end{smallmatrix}\right)$ & No & 6 & {\small\textcolor{black}{[1, 0, 1, -171, -874]}} \\
%\bottomrule
\end{tabular}\\[1.5ex]
\caption{The two most relevant exceptional $G_E(3)$ for non-CM elliptic curves $E/\Q$}\label{tab:3B}
\end{table}

}

\begin{proof}
If $G_E(2)=\texttt{2B}$, then by \cite[Theorem 1.1]{zywina}, $j(E) = \frac{256(y+1)^3}{y}$.
Since the generators for both \texttt{3B.1.1} and \texttt{3B.1.2} are upper-triangular but not diagonal matrices, they stabilize a unique 1-dimensional subspace. Hence $E$ admits exactly one rational $3$-isogeny. The existence of full $3$-torsion over $K$ implies that the $3$-torsion field $\Q(E[3])$ is a subfield of $K$, their common degree of $6$ ($|G_E(3)|=6$, see $d$-column of the Table \ref{tab:3B}) forces the two fields to coincide, i.~e., $K=\Q(E[3])$. Since $E(\Q(3^\infty))$ contains $E(\Q(E[3]))$ as a subset, it also contains a point of order $9$. We now divide the proof into two cases.

1.) If $E$ does not have a rational $9$-isogeny, by \cite[Lemma 6.13]{compositum}, we get that $
j(E) = \frac{(x+3)(x^2 - 3x + 9)(x^3+3)^3}{x^3}.
$
Hence we must have the relation
\[
\frac{(x+3)(x^2 - 3x + 9)(x^3+3)^3}{x^3} 
= \frac{256(y+1)^3}{y}, \quad \text{ for some } x,y \in \Q^\times.  
\] \vskip -0.3em

\noindent Clearing denominators leads to a curve which is birational to $
X: \,\, y^2 + (x^3+1)y = -9x^3$.
This is a genus $2$ hyperelliptic curve whose Jacobian has rank $0$ over $\Q$.  
A short computation in Magma (see code \href{https://github.com/NikolaAdzaga/SexticTorsion/blob/main/Prop.3.4.m}{\texttt{Prop.3.4.m}}) shows that the rational points on $X$ correspond to CM points (elliptic curves with $j$-invariants $0$ and $54000$, both of which are CM elliptic curves). %Hence, they do not correspond to elliptic curves with $C_3 \oplus C_{18}$ torsion over sextic fields.

2.) If $E$ has a rational $9$-isogeny, by \cite[Appendix]{Ingram}, $E$ is a twist of an elliptic curve
$
E_t : \,\, y^2 = x^3 - 3t(t^3-24)x + 2(t^6 - 36t^3 + 216),$
where $t \in \mathbb{Q}\setminus\{3\}$. We have $j(E_t) = \frac{t^3 (t^3 - 24)^3}{t^3 - 27}$ and $
\Delta(E_t) = 2^{12}3^6 (t^3 - 27)$. Since $E$ is a twist of some $E_t$, its discriminant is $\Delta(E) = u^6 \Delta(E_t)$ for some $u \in \mathbb{Q}$. The Weil pairing implies that $\mathbb{Q}(\zeta_3) \subseteq K$ and since $K$ is an $S_3$ extension of 
$\mathbb{Q}$, we conclude that $\mathrm{Gal}(K/\mathbb{Q}(\zeta_3)) \cong C_3$, 
which implies that the discriminant of $E$ is a square in $\mathbb{Q}(\zeta_3)$, which is equivalent to $C : y^2 = t^3 - 27$ having a solution $t \in \mathbb{Q}\setminus\{-6,0,3\},\; y \in \mathbb{Q}(\zeta_3)$. This curve has rank $0$ over $\mathbb{Q}(\zeta_3)$, but we can also write $y=a+b\sqrt{-3}$, leading to $2ab=0$. We find that the only solutions are for $t=0, -6$, but both $E_0$ and $E_{-6}$ have CM. %Therefore, in this case there does not exist an elliptic curve with $C_3 \oplus C_{18}$ torsion defined over $\mathbb{Q}$. 
\end{proof}

To prove Theorem \ref{tm:no318}, we will need to analyze several finite-index subgroups of $\GL_2(\Z_3)$ of level $9$, potentially containing $G_E(9)$. Some are incompatible with $G_E(3)=\texttt{3Cs.1.1}$.
\begin{proposition}\label{prop:9}
    There is no elliptic curve $E/\Q$ without CM such that $G_E(3)$ is conjugate to $\texttt{3Cs.1.1}$ and $G_E(9)$ is conjugate to a subgroup of either $9B^0-9a$ or $9J^0-9b$, where \\ $9B^{0}-9a = \big\langle 
\left(\begin{smallmatrix} 1 & 1 \\ 0 & 1 \end{smallmatrix}\right), 
\left(\begin{smallmatrix} 2 & 0 \\ 0 & 5 \end{smallmatrix}\right), 
\left(\begin{smallmatrix} 1 & 0 \\ 0 & 2 \end{smallmatrix}\right) 
\big\rangle$, and $9J^{0}-9b = \big\langle \left(\begin{smallmatrix} 1 & 3 \\ 0 & 1 \end{smallmatrix}\right), \left(\begin{smallmatrix} 2 & 2 \\ 3 & 8 \end{smallmatrix}\right), \left(\begin{smallmatrix} 2 & 1 \\ 0 & 1 \end{smallmatrix}\right) \big\rangle$.
\end{proposition}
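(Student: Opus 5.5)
I will treat this as a group-theoretic compatibility check followed by a Diophantine analysis on modular curves. The first step is to reduce the two level-$9$ groups modulo $3$ and compare them with \texttt{3Cs.1.1}. Recall that \texttt{3Cs.1.1} is the split Cartan subgroup of order $2$ generated (up to conjugacy) by $D(1,2)$.

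The reduction of $9B^0\text{-}9a$ is generated by $\left(\begin{smallmatrix}1&1\\0&1\end{smallmatrix}\right)$, $D(2,2)$ and $D(1,2)$. This contains an element of order $3$, so no subgroup of $9B^0\text{-}9a$ whose reduction is \texttt{3Cs.1.1} can surject onto that unipotent part. The real content is therefore the following: $G_E(9)$ must be a subgroup $H\le 9B^0\text{-}9a$ with $H \bmod 3$ conjugate to $\langle D(1,2)\rangle$, and with $\det H=(\mathbb Z/9)^\times$.

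The same analysis applies to $9J^0\text{-}9b$. Its generators reduce mod $3$ to $\left(\begin{smallmatrix}1&0\\0&1\end{smallmatrix}\right)$, $\left(\begin{smallmatrix}2&2\\0&2\end{smallmatrix}\right)$ and $\left(\begin{smallmatrix}2&1\\0&1\end{smallmatrix}\right)$, so again the mod-$3$ image is Borel and contains a unipotent element. In each case I would enumerate in Magma all subgroups $H$ of the given group, up to conjugacy in $\GL_2(\mathbb Z/9)$, satisfying three conditions:
\begin{itemize}
\item $\det H=(\mathbb Z/9\mathbb Z)^\times$;
\item $H$ contains an element of trace $0$ and determinant $-1$ (complex conjugation);
\item $H\bmod 3$ is conjugate to \texttt{3Cs.1.1}.
\end{itemize}
I expect this to leave only a few groups, possibly after passing to $\pm H$.

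For each surviving $H$, I would identify $\pm H$ in the RSZB database of \cite{adic}. Corollary \ref{kor:RSZB3} then says that either $X_{\pm H}$ is $\mathbb P^1$ or a rank-one elliptic curve (and then $H$ appears in the tables of \cite{34}), or the $3$-adic image lies in $N_{ns}(27)$. The second alternative is impossible, because \texttt{3Cs.1.1} is not contained in a nonsplit Cartan normalizer mod $3$ with the required Borel-type lift structure. So it suffices to check that none of the surviving $H$ is among the genus-$0$ or rank-one groups with rational non-CM points. Equivalently, I would compute the modular curve $X_H$: it covers both $X_{\texttt{3Cs.1.1}}$ (a genus-$0$ curve whose $j$-map is known from \cite{zywina}) and $X_{9B^0\text{-}9a}$ (respectively $X_{9J^0\text{-}9b}$), whose $j$-maps are in \cite{34}. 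I would then find its rational points, either as the fibre product of the two $j$-maps or directly from the RSZB label.

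The main obstacle is the last step when $X_H$ has positive genus. The fibre product of two genus-$0$ $j$-maps gives a curve that is likely of genus $1$ or higher. For an elliptic curve of rank $0$, I would enumerate the torsion points and check that they map to cusps or CM $j$-invariants. In genus $2$, I would use a rank-$0$ Jacobian and Chabauty/Mordell--Weil sieve, as in the proof of Proposition \ref{prop:Borel}. A cheaper first attempt is to note that \texttt{3Cs.1.1} forces $E[3]\cong\mu_3\oplus\mathbb Z/3$ as Galois modules, i.e. two independent rational $3$-isogenies with one rational $3$-torsion point. Since the mod-$3$ reduction of each of $9B^0\text{-}9a$ and $9J^0\text{-}9b$ contains a nontrivial unipotent element, the groups are incompatible already at level $3$, and no subgroup $H$ should survive the enumeration. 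I would first verify this by a direct matrix computation, and fall back to the modular-curve analysis only if some $H$ with the required reduction genuinely exists.
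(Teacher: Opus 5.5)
Your proposed level-$3$ shortcut fails, and the inference behind it is wrong. From $G_E(9)\le G$ you only get $G_E(3)\le G \bmod 3$, so a unipotent element in $G\bmod 3$ is irrelevant. Concretely, $D(1,2)=\left(\begin{smallmatrix}1&0\\0&2\end{smallmatrix}\right)$ is itself one of the listed generators of $9B^0\text{-}9a$. The group $\langle D(1,2)\rangle\subseteq\GL_2(\mathbb{Z}/9\mathbb{Z})$ has order $6$ and surjective determinant. It contains $D(1,-1)$, which has trace $0$, determinant $-1$ and fixes $e_1$, and it reduces to \texttt{3Cs.1.1}. Likewise $g=\left(\begin{smallmatrix}2&1\\0&1\end{smallmatrix}\right)\in 9J^0\text{-}9b$ generates such a group, with $g^3=\left(\begin{smallmatrix}8&7\\0&1\end{smallmatrix}\right)$ fixing $(1,-1)$. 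So your enumeration is nonempty, group theory decides nothing, and the whole proposition rests on the Diophantine step, for which you only list tools.

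Your route to that step through Corollary~\ref{kor:RSZB3} is also unsound. You claim the $N_{ns}(27)$ alternative is impossible, but \texttt{3Cs.1.1}$\,=\langle J\rangle\subseteq N_{ns}(3)$ by the very definition of $N_{ns}$. For instance, $\langle D(2,-2)\rangle\subseteq\GL_2(\mathbb{Z}/27\mathbb{Z})$ lies in both $N_{ns}(27)$ and the Borel subgroup. It has surjective determinant, contains $D(-1,1)$, and reduces to a conjugate of \texttt{3Cs.1.1}.

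The detour is unnecessary anyway, and this is how the paper proceeds. Any $E$ as in the statement gives a rational point $(s,t)$ on the plane curve $j_{\texttt{3Cs.1.1}}(s)=j_{9B^0\text{-}9a}(t)$ (resp.\ with $9J^0\text{-}9b$), whatever its full $3$-adic image is. This fibre product is not a single $X_H$; it splits along double cosets. For $9B^0\text{-}9a$ the paper gets three components:
\begin{itemize}
\item two of genus $1$ and rank $0$, whose rational points give only cusps and the CM value $j=0$;
\item a highly singular one, shown to have no affine rational points via a map to $\PP^1$ and a resultant elimination.
\end{itemize}
Conceptually, the third component is where the order-$3$ subgroup of the $9$-isogeny kernel is neither stable line of \texttt{3Cs.1.1}. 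That forces a scalar mod-$3$ image, hence a non-surjective determinant. Your generic plan (rank-$0$ elliptic curves, Chabauty in genus $2$) does not anticipate this decomposition. Until these rational points are actually determined for both groups, the proposition is not proved.

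For $9B^0\text{-}9a$ there is also a short conceptual argument. Let $A,B$ be the two stable lines of $E[3]$ and $C$ the rational cyclic subgroup of order $9$. Then $C[3]$ is one of the two lines, say $A$. Composing the dual of $E\to E/B$ with $E\to E/C$ gives a rational $27$-isogeny on $E/B$. Its kernel is cyclic because the dual maps $(E/B)[3]$ onto $B\not\subseteq C$. The only non-cuspidal rational point of $X_0(27)$ is the CM point $j=-2^{15}\cdot 3\cdot 5^3$, so $E$ would have CM.
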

\begin{proof}
    For $\texttt{3Cs.1.1}$, we use the $j$-map from \cite[Theorem 1.2, subgroup $H_{1,1}=\texttt{3Cs.1.1}$]{zywina}, which is $j_{\texttt{3Cs.1.1}}(s) =\frac{27 (s + 1)^3  (s + 3)^3  (s^2 + 3)^3}{ s^3 (s^2 + 3s + 3)^3}$.
The $j$-map for $9B^{0}-9a$, found in the Magma file on Andrew Sutherland's \href{https://math.mit.edu/~drew/SZ16/g0groups.m}{webpage} \cite{34}, is given by $j_{9B^{0}-9a}(t) = \frac{(t+3)^{3}(t^{3}+9t^{2}+27t+3)^{3}}{t(t^{2}+9t+27)}$. The modular curve $X$ is defined by the equation $j_{\texttt{3Cs.1.1}}(s) = j_{9B^{0}-9a}(t)$. Factoring the defining equation of $X$ yields three components $C_1,C_2,C_3$.
The curves $C_1$ and $C_2$ have genus $1$; after projective closure and choosing a rational point,
we obtain elliptic curve models of rank $0$ over $\Q$, hence all rational points are torsion.
Pulling these points back to $X$ yields only points at infinity and the CM point $j=0$.  (The corresponding Magma commands for this modular curve are contained in
\href{https://github.com/NikolaAdzaga/SexticTorsion/blob/main/Prop.3.5.m}{\texttt{Prop.3.5.m}}.)

For the remaining component $C_3$, which is highly singular, we show that it has no affine rational points.
Using Magma we compute a rational map $\phi=(F_1:F_2)\colon C_3\dashrightarrow \PP^1$
(via \texttt{CanonicalMap}); thus any affine rational point $(s,t)\in C_3(\Q)$ with $F_2(s,t)\neq 0$
yields a rational parameter $u=F_1(s,t)/F_2(s,t)\in\Q$ satisfying $F_1(s,t)-uF_2(s,t)=0$.
Eliminating $s$ from these equations by a resultant produces an element of $\mathbb{Q}[t,u]$ whose factorization
shows that no such $t,u\in\Q$ can exist.
We also verify separately that there are no rational points on the locus
$F_2=0$, and that the only singular points of the projective closure are points at infinity.
Therefore $C_3(\Q)$ has no affine rational points.

A completely analogous argument resolves $9J^{0}\text{-}9b$; see the Magma script
\href{https://github.com/NikolaAdzaga/SexticTorsion/blob/main/9J0-9b.m}{\texttt{9J0-9b.m}}.
\end{proof}
\vskip -1.4em

\begin{proof}[Proof of Thm \ref{tm:no318}]    
Assume that $C_{3} \oplus C_{18} \subseteq E(K)_{tors}$ for $E/\Q$ and a sextic field $K$. By \cite[Theorem 18]{tomi1}, it follows that $G_{E}(2)=\texttt{2B}$.  Since our sextic field $K$ contains the full $3$-torsion, $|G_{E}(3)|$ must divide $6$. Consulting \cite[Table 3, p.~64]{SUTHERLAND_2016}, we conclude $G_E(3)\in \{ \texttt{3Cs.1.1},\, $ $\texttt{3B.1.1},\, \texttt{3B.1.2}\}$. Proposition \ref{prop:Borel} excludes the last two (Borel) groups, so the image of the mod $3$ representation is $G_E(3)= \texttt{3Cs.1.1}$. Hence $E$ must have a rational $3$-isogeny -- in the next paragraph we will need exact $G_E(3)$, but for now this is sufficient. By \cite[ \S 1.3 Corollary 1.1]{adic}, it follows that $H=\rho_{E,3^{\infty}}(G_{\mathbb{Q}})$ must have genus equal to $0$. By Corollary \ref{kor:RSZB3} we only need to consider groups $H$ appearing in the \cite[Table 1]{34}.

Since there are only finitely many possibilities for the $3$-adic image of $E$, we deduce information about $G_{E}(9)$. To begin with, $E$ has a rational point of order $2$. The next claim we prove is the following: there is a point $P_9 \in E(K)$ of order $9$, such that $[\Q(P_{9}):\Q]=6$.

Let $P_9\in E(K)$ be a point of order $9$ and put $P_3=3P_9$.  Since $E[3]\subseteq E(K)$, the $3$-division field
$\Q(E[3])$ satisfies $\Q(E[3])\subseteq K$, and for $G_E(3)=\texttt{3Cs.1.1}$ we have $[\Q(E[3]):\Q]=|G_E(3)| = 2$, hence $[K:\Q(E[3])]=3$.
If $P_9$ is not defined over any proper subfield of $K$, then $[\Q(P_9):\Q]=6$.
Otherwise $\Q(P_9)$ is a proper subfield of $K$. It cannot be quadratic: since $E(\Q)[2]\neq 0$,
we have $C_2\subseteq E(F)_{\tors}$ for every quadratic field $F$, and by Theorem~\ref{thm:PhiQ2}
no quadratic field can contain a point of order $18$. Thus $P_9\in E(L)$ for some cubic subfield $L\subset K$.  In that case $E(L)_{\tors}\cong  C_{18}$ by Najman's cubic torsion classification, i.~e., by Theorem \ref{thm:PhiQ3}, 
so $L$ contains exactly $6$ points of (exact) order $9$; consequently, if all
$18$ order-$9$ points of $E(K)$ are defined over proper subfields of $K$, they must be distributed among three
distinct cubic subfields of $K$. Moreover, $K$ is then Galois, but the number of distinct cubic subfields is sufficient: we exclude this possibility by a group-theoretic computation using the action of $G_E(9)$ on $E[9]$. If $[\Q(P_{9}):\Q]=3$, then viewing $P_9$ as a vector of (exact) order $9$ in
$E[9]\cong (\Z/9\Z)^2$, the orbit-stabilizer theorem shows that the stabilizer of
$P_9$ in $G_{E}(9)$ has index $3$. For each $H$ in \cite[Table 1]{34}, the actual image $G_E(9)$ is either $H$ or an
index-$2$ subgroup $H'\le H$ such that $H=\langle H',-I\rangle$ (in the
\href{https://github.com/NikolaAdzaga/SexticTorsion/blob/main/index_3_or_6_subgroup_search.m}{accompanying Magma code}
we denote such an $H'$ by \texttt{Hsub}). In each case, we compute the number of
index-$3$ subgroups of $G_E(9)$ fixing a vector of (exact) order $9$. Since for
every such $G_E(9)$ this number is at most two (see
\href{https://github.com/NikolaAdzaga/SexticTorsion/blob/main/index_3_or_6_subgroup_search.log}{log file}),
the above distribution among three cubic subfields is impossible.
 We conclude
that there exists a point $P_9\in E(K)$ of order $9$ with $[\Q(P_9):\Q]=6$.
%Equivalently, viewing $P_9$ as a primitive vector in $E[9]$, its stabilizer in $G_E(9)$ has index $6$.

For each $H$ in \cite[Table 1]{34} and each subgroup $H'\le H$ such that
$H=\langle H',-I\rangle$, we test whether $H'$ has a stabilizer of index $6$ on a vector of (exact) order $9$ in $(\Z/9\Z)^2$. The only $H\le \GL_{2}(\Z/9\Z)$ for which some
such $H'$ exists are the following groups:
$9B^0\text{-}9a$, $9H^{0}\text{-}9b$, $9I^{0}\text{-}9a$, $9I^{0}\text{-}9b$, $9I^{0}\text{-}9c$, and $9J^{0}\text{-}9b$.
Proposition~\ref{prop:9} excludes $9B^0\text{-}9a$ and $9J^{0}\text{-}9b$.

\newpage
Focusing on $9H^{0}-9b=\big\langle
\left(\begin{smallmatrix}
1 & 0 \\
3 & 1
\end{smallmatrix}\right), 
\left(\begin{smallmatrix}
5 & 3 \\
0 & 2
\end{smallmatrix}\right),
\left(\begin{smallmatrix}
2 & 0 \\
1 & 1
\end{smallmatrix}\right)
\big\rangle$, the corresponding $j$-map is \vskip -0.35em
 \small \[j_{9H^{0}-9b}(t)=\frac{(t^3-3t^2-9t+3)^3(t^3+9t^2-9t-9)^3(t^6-18t^5+171t^4+180t^3-297t^2-162t+189)^3}{8(t^2-1)^3(t^2+3)^9(t^3-9t^2-9t+9)^3}. \]
\normalsize 
We again use the $j$-map corresponding to $\texttt{2B}$, $j_{\texttt{2B}}(s)=\frac{256(s+1)^3}{s}$.
The induced modular curve $X_1: \; j_{9H^{0}-9b}(t)=j_{\texttt{2B}}(s)$ \, has genus $2$ and its Jacobian has rank $0$. %The hyperelliptic model of this curve is $X_{1, \text{hyp}}: \; y^2 = x^6 + 4x^5 + 10x^4 + 10x^3 + 5x^2 + 2x+ 1$.
Simple computation in Magma now finds all the rational points on this curve. Pulling them back onto original curve gives only points with $s=0$, which do not provide a valid $j$-invariant since $j_{\texttt{2B}}(s)=\frac{256(s+1)^3}{s}$. A completely analogous argument resolves groups $9I^{0}\text{-}9a$, $9I^{0}\text{-}9b$, and $9I^{0}\text{-}9c$
(the code is provided in
\href{https://github.com/NikolaAdzaga/SexticTorsion/blob/main/X9I0a.m}{\texttt{X9I0a.m}},
\href{https://github.com/NikolaAdzaga/SexticTorsion/blob/main/X9I0b.m}{\texttt{X9I0b.m}}, and
\href{https://github.com/NikolaAdzaga/SexticTorsion/blob/main/X9I0c.m}{\texttt{X9I0c.m}}), leading only to CM $j$-invariants.
 We conclude that there does not exist an elliptic curve $E/\mathbb{Q}$ without CM and a sextic number field $K$ such that $C_{3} \oplus C_{18} \subseteq E(K)_{tors}$.
\end{proof}
Finally, since $C_3\oplus C_{18}$ cannot occur as the torsion subgroup of CM curves $E/K$ with $K$ sextic \cite[§4.6]{CMCCRS}, and together with \cite[Theorem 1]{tomi1}, we conclude Theorem \ref{tm:complete} also holds.

\subsection*{Declaration of competing interest}
\noindent The authors declare that they have no known competing financial interests or personal
relationships that could have appeared to influence the work reported in this paper.

\subsection*{Declaration of generative AI and AI-assisted technologies in the manuscript preparation process}
\noindent During the preparation of this work, the authors used ChatGPT (OpenAI) and Gemini (Google) to assist with rewriting and reorganizing portions of the exposition, expanding background explanations (including material related to subgroups of $\mathrm{GL}_2$, guided by \cite[Chap.~XI%where the going is comparatively easy
]{LangMF}) -- and N.~A.\ acknowledges the use of these tools while learning about subgroups of $\GL_2$. The tools were also used to help refactor and document the accompanying code, and to improve overall readability. The first version of the manuscript and code was written without AI tools. After using these tools, the authors carefully reviewed, checked, and edited all content and code as needed and take full responsibility for the content of this manuscript.

%\vskip -3em
%For elliptic curves $E$ and a given sextic field $K$, it is an open problem to determine the possible torsion structures $E(K)_{\tors}$
\subsection*{Acknowledgements}
\noindent We thank Filip Najman for careful reading and numerous comments that improved this manuscript. We thank Maarten Derickx for remarks on the geometric properties of Galois representations that influenced the revision of Subsection \ref{subsec:Subgroups}. We thank an anonymous referee for helpful comments on an earlier version of this manuscript. N.\ A.\ was supported by the institutional project "Graphs, elliptic and modular curves" funded by the University of Zagreb Faculty of Civil Engineering. This work was supported by the Croatian Science Foundation under the project number HRZZ IP-2022-10-5008, and by the project “Implementation of cutting-edge research and its application as part of the Scientific Center of Excellence for Quantum and Complex Systems, and Representations of Lie Algebras“, Grant No.~PK.1.1.10.0004, co-financed by the European Union through the European Regional Development Fund - Competitiveness and Cohesion Programme 2021-2027.
%\nocite{*}
\printbibliography

\end{document}